\theoremstyle{definition}
\newtheorem{Thm}{Theorem}[section]
\newtheorem{Def}[Thm]{Definition}
\newtheorem{Lem}[Thm]{Lemma}
\newtheorem{Cor}[Thm]{Corollary}
\newtheorem{Prop}[Thm]{Proposition}
\newtheorem{Rem}[Thm]{Remark}
\def\Z{\mathbb{Z}}
\def\bbh{\mathbb{H}}
\def\bbk{\mathbb{K}}
\def\bbl{\mathbb{L}}
\def\calk{\mathcal{K}}
\def\call{\mathcal{L}}
\def\calm{\mathcal{M}}
\def\Aut{\mathrm{Aut}}
\def\Out{\mathrm{Out}}
\title{Hyperbolically embedded virtually free subgroups of relatively hyperbolic groups}
\date{}
\author{
Yoshifumi Matsuda\footnote
{Graduate School of Mathematical Sciences, 
University of Tokyo, 
3-8-1 Komaba, 
Meguro-ku, 
Tokyo, 
153-8914 Japan,
ymatsuda@ms.u-tokyo.ac.jp} \footnote{
Supported by the Global COE Program at Graduate School of Mathematical Sciences, the University of Tokyo, and Grant-in-Aid for Scientific Researches for Young Scientists (B) (No. 22740034), Japan Society of Promotion of Science.}, 
Shin-ichi Oguni\footnote
{Department of Mathematics, Faculty of Science,
Ehime University,
2-5 Bunkyo-cho, 
Matsuyama, 
Ehime, 
790-8577 Japan,
oguni@math.sci.ehime-u.ac.jp}, 
Saeko Yamagata\footnote
{Faculty of Education and Human Sciences, Yokohama National University,
240-8501 Yokohama, Japan, 
yamagata@ynu.ac.jp}
}
\begin{document}

\maketitle
\begin{abstract} 
We show that if a group is not virtually cyclic and is hyperbolic relative to a family of proper subgroups, then it has a hyperbolically embedded subgroup which contains a finitely generated non-abelian free group as a finite index subgroup. \\

\noindent
Keywords:
relatively hyperbolic groups; 
hyperbolically embedded subgroups; 
strongly relatively undistorted subgroups; 
almost malnormal subgroups. \\

\noindent
2010MSC:
20F67;
20F65
\end{abstract}

\section{Introduction}
The notion of relatively hyperbolic groups was introduced in \cite{Gro87} and has been studied by many authors (see for example \cite{Bow12}, \cite{D-S05}, \cite{Far98} and \cite{Osi06a}). 
In this paper we consider relatively hyperbolic groups in accordance with a definition due to D. Osin \cite[Definition 2.35]{Osi06a}. 
Note that for certain cases (e.g. for finitely generated groups), this definition has several equivalent formulations (see for example \cite[Sections 3 and 5]{Hru10}). 

In \cite{Osi06b}, D. Osin introduced the notion of hyperbolically embedded subgroups of a relatively hyperbolic group. 

\begin{Def}\label{def-hypemb} (\cite[Definition 1.4]{Osi06b})
Let $G$ be a group which is hyperbolic relative to a family $\bbk$ of subgroups. A subgroup $H$ of $G$ is said to be hyperbolically embedded into $G$ relative to $\bbk$ if $G$ is hyperbolic relative to $\bbk\cup\{H\}$.
\end{Def}

If $G$ is infinite and hyperbolic relative to a family $\bbk$ of proper subgroups, then there exists a virtually infinite cyclic subgroup of $G$ which is hyperbolically embedded into $G$ relative to $\bbk$ (see \cite[Corollaries 1.7 and 4.5]{Osi06b}). 
Also if $G$ is torsion-free, hyperbolic and not cyclic, then it contains a free subgroup of rank two which is quasiconvex and malnormal in $G$, that is, hyperbolically embedded into $G$ relative to the empty family $\emptyset$ (see \cite[Theorem C]{Kap99} and \cite[Theorem 7.11]{Bow12}). 
In this paper we show the following (see also Theorem \ref{hypemb'}). 

\begin{Thm}\label{hypemb}
Suppose that a group $G$ is not virtually cyclic and is hyperbolic relative to a family $\bbk$ of proper subgroups. 
Then there exists a finitely generated and virtually non-abelian free subgroup of $G$ which is hyperbolically embedded into $G$ relative to $\bbk$. 
Moreover if $G$ is torsion-free, then it contains a free subgroup of rank two which is hyperbolically embedded into $G$ relative to $\bbk$. 
\end{Thm}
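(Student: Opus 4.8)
\emph{Plan of proof.} The idea is to realize the required subgroup as a free product $E_{1}\ast E_{2}$ of two maximal elementary (virtually cyclic) subgroups coming from a well-chosen pair of independent loxodromic elements, and to verify directly that this free product satisfies a criterion for being hyperbolically embedded. I would first isolate such a criterion: a finitely generated subgroup $H\le G$ is hyperbolically embedded into $G$ relative to $\bbk$ in the sense of Definition \ref{def-hypemb} if and only if $H$ is strongly relatively undistorted, $H$ is almost malnormal in $G$, and $H\cap gKg^{-1}$ is finite for every $K\in\bbk$ and every $g\in G$; this is where the notions named in the keywords enter, and it is a reformulation of the standard characterizations of relative hyperbolicity. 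Next, since $G$ is not virtually cyclic and $\bbk$ consists of proper subgroups, $(G,\bbk)$ is non-elementary, so it admits a loxodromic element $a$; let $E_{1}$ be the unique maximal elementary subgroup containing $a$, which is virtually cyclic and finitely generated. By \cite[Corollaries 1.7 and 4.5]{Osi06b}, after choosing $a$ suitably we may take $E_{1}$ hyperbolically embedded, i.e. $G$ is hyperbolic relative to $\bbk_{1}:=\bbk\cup\{E_{1}\}$. The pair $(G,\bbk_{1})$ is again non-elementary, so it has a loxodromic element $b$; letting $E_{2}$ be the maximal elementary subgroup containing $b$ and applying \cite[Corollaries 1.7 and 4.5]{Osi06b} once more, $G$ is hyperbolic relative to $\bbk':=\bbk\cup\{E_{1},E_{2}\}$. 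Because $b$ is loxodromic with respect to a structure already containing $E_{1}$, the subgroups $E_{1}$ and $E_{2}$ are independent: $E_{1}\cap E_{2}$ is finite, neither is conjugate into the other nor into a member of $\bbk$, and each meets every conjugate of every member of $\bbk$ in a finite subgroup. Replacing $b$ by a conjugate $tbt^{-1}$ of sufficiently large relative length (as one may for all but finitely many double cosets $E_{1}tE_{2}$), we may further assume that $E_{1}$ and $E_{2}$ are sufficiently separated in the relative Cayley graph of $(G,\bbk)$.

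\emph{The free product is hyperbolically embedded.} Set $L:=\langle E_{1},E_{2}\rangle$. The heart of the matter is to prove, for $E_{1},E_{2}$ as above, that (i) $L=E_{1}\ast E_{2}$; (ii) $L$ is strongly relatively undistorted in $(G,\bbk)$; (iii) $L$ is almost malnormal in $G$; and (iv) $L\cap gKg^{-1}$ is finite for every $K\in\bbk$ and every $g\in G$. Claims (i) and (ii) follow from a ping-pong argument in the relative Cayley graph: since $E_{1}$ and $E_{2}$ are separated and each is relatively quasiconvex, a word in alternating syllables from $E_{1}\setminus\{1\}$ and $E_{2}\setminus\{1\}$ traces out a quasigeodesic that synchronously fellow-travels the corresponding translates of $E_{1}$ and $E_{2}$, which yields both injectivity of $E_{1}\ast E_{2}\to G$ and the undistortion estimate. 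For (iii): if $gLg^{-1}\cap L$ were infinite with $g\notin L$, it would contain an infinite-order element $h$; using the maximality of $E_{1},E_{2}$ together with (ii) one shows $E_{G}(h)\le L$, so $g$ would have to carry one peripheral-type piece of $L$ onto another while fixing the associated points of the Bowditch boundary, which the ping-pong dynamics forbids unless $g\in L$. Claim (iv) is proved similarly, combining the corresponding property of $E_{1},E_{2}$ with normal-form estimates. By the criterion of the first paragraph, $L$ is hyperbolically embedded into $G$ relative to $\bbk$.

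\emph{Conclusion.} The subgroup $L=E_{1}\ast E_{2}$ is finitely generated, being a free product of two finitely generated virtually cyclic groups, and it is virtually free (it is the fundamental group of a finite graph of finite groups); since $E_{1}$ and $E_{2}$ are infinite, $L$ properly contains the infinite dihedral group and therefore contains a non-abelian free subgroup, which proves the first assertion. If $G$ is torsion-free, then $E_{1}$ and $E_{2}$ are torsion-free infinite virtually cyclic groups, hence infinite cyclic, so $L\cong\Z\ast\Z$ is free of rank two, which proves the second assertion.

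\emph{Main obstacle.} The essential difficulty is the combination step: establishing simultaneously that $\langle E_{1},E_{2}\rangle$ is the free product, is strongly relatively undistorted, and is almost malnormal. The undistortion requires carefully controlling how geodesics of the relative Cayley graph joining points of $L$ remain uniformly close to $L$, which is exactly where the separation of $E_{1},E_{2}$ and the relative quasiconvexity of each $E_{i}$ must be combined, while the almost malnormality requires ruling out exotic conjugators, for which both the maximality of the $E_{i}$ and the north--south dynamics on the Bowditch boundary are needed. A subsidiary technical point is the bookkeeping between the peripheral structures $\bbk$, $\bbk_{1}$ and $\bbk'$: one must check that the quasiconvexity and malnormality assertions, which it is natural to verify with respect to $\bbk'$, descend to statements relative to the original family $\bbk$.
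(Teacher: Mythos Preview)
Your overall strategy is genuinely different from the paper's. Both approaches begin the same way: produce two independent hyperbolic elements and combine the associated virtually cyclic subgroups. The paper does this in its Lemma~3.7, using Mart\'inez-Pedroza's combination theorem (the paper's Lemma~3.6) to obtain a rank-two free subgroup $F\le\Gamma$ that is strongly quasiconvex relative to~$\bbk$. But the paper does \emph{not} claim that $F$, or anything built directly from $E_1$ and $E_2$, is almost malnormal. Instead it passes to $M=V_G(F)$, which is virtually free with $F$ of finite index; invokes Hruska--Wise to list the finitely many double cosets $Mg_lM$ for which $M\cap g_lMg_l^{-1}$ is infinite; and then applies a purely group-theoretic result about virtually free groups (its Theorem~4.1, extending \cite[Theorem~5.16]{Kap99}) to find inside $M$ a virtually non-abelian free subgroup $V$ that is almost malnormal in $M$ and has finite intersection with every $mM_lm^{-1}$. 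Almost malnormality of $V$ in $G$ then follows from almost malnormality in $M$ together with this avoidance condition. In short, the paper replaces your dynamical combination argument by an algebraic argument internal to a virtually free group.

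The substantive gap in your proposal is step~(iii). Maximality of the $E_i$ only controls conjugators that \emph{commensurate a fixed infinite cyclic subgroup}; almost malnormality of $L$ requires ruling out every $g\notin L$ for which $gLg^{-1}\cap L$ is infinite, and such a $g$ may send an infinite-order $h\in L$ to an element of $L$ in a completely different $L$-conjugacy class. Your assertion that ``$g$ would have to carry one peripheral-type piece of $L$ onto another while fixing the associated points of the Bowditch boundary'' is unjustified: $g$ need not fix any limit point of $L$; it need only send \emph{some} pair of fixed points of an element of $L$ to \emph{another} such pair, and the limit set of $L$ contains infinitely many such pairs. This is exactly the difficulty that forces the paper to develop Section~4 rather than stop at Lemma~3.7. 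A secondary issue: when $G$ has a nontrivial finite normal subgroup $N$ one has $N\le E_1\cap E_2$, so $L\ne E_1\ast E_2$ and the ping-pong with syllables in $E_i\setminus\{1\}$ fails as written; at best one obtains an amalgam over a finite subgroup. (As the paper notes in Remark~\ref{rem-DGO}, an argument closer in spirit to yours does succeed via the machinery of \cite{D-G-O11}, but that requires considerably more than a ping-pong estimate.)
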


We refer to \cite[Theorems 1.4 and 1.5]{M-O-Y5} for applications of this theorem to the study of convergence actions of groups. 
We also refer to \cite[Theorem 6.3]{M-O-Y3} for another application. 

\begin{Rem}\label{rem-DGO}
After the first version of this paper appeared, the notion of a hyperbolically embedded subgroup was further generalized in \cite{D-G-O11}. 
It turns out that we can prove a stronger version of Theorem \ref{hypemb} by using the argument in the proof of \cite[Theorem 6.14 (c)]{D-G-O11} (see \cite[Appendix B]{M-O-Y5} for details). 
In what follows, hyperbolically embedded subgroups which we consider are those in the sense of \cite{Osi06b}. 
\end{Rem}

In Section \ref{sect-char}, we recall the fact that hyperbolically embedded subgroups of a relatively hyperbolic group are characterized as strongly relatively undistorted and almost malnormal subgroups. 
Strongly relatively undistorted free subgroups of rank two of a relatively hyperbolic group are found in Section \ref{sect-sruf}. 
In Section \ref{sect-amvf}, we construct almost malnormal subgroups of a virtually free group with additional properties. 
Theorem \ref{hypemb} is proved in Section \ref{sect-proof}. 

\section{Characterization of hyperbolically embedded subgroups}
\label{sect-char}
The strategy of our proof of Theorem \ref{hypemb} is based on Osin's characterization of hyperbolically embedded subgroups of relatively hyperbolic groups stated below. 

To state the characterization, we begin by introducing several definitions. 
Let $G$ be a group. 
For a family $\bbk$ of subgroups of $G$, we put $\calk=\bigcup_{K\in\bbk} K\setminus\{1\}$. 
A subset $X$ of $G$ is called a relative generating set of $G$ with respect to $\bbk$ if $G$ is generated by $X\cup\calk$. 
The group $G$ is said to be finitely generated relative to $\bbk$ if there exists a finite relative generating set of $G$ with respect to $\bbk$. 
When $Z$ is a (possibly infinite) generating set of $G$, we denote by $\Gamma(G,Z)$ the Cayley graph of $G$ with respect to $Z$ and by $d_Z$ the word metric with respect to $Z$. 

\begin{Def}\label{def-sru}
Let $G$ be a group which is finitely generated relative to a family $\bbk$ of subgroups. 
A subgroup $H$ of $G$ is said to be strongly undistorted relative to $\bbk$ in $G$ if $H$ is generated by some finite subset $Y$ and for some finite relative generating set $X$ of $G$ with respect to $\bbk$, the natural map $(H,d_Y)\to (G,d_{X\cup\calk})$ is a quasi-isometric embedding. 

\end{Def}

\begin{Def}\label{def-am}
Let $G$ be a group and $H$ a subgroup of $G$. 
The subgroup $H$ is said to be malnormal (resp. almost malnormal) in $G$ if for every element $g$ of $G\setminus H$, the intersection $H \cap gHg^{-1}$ is trivial (resp. finite). 
\end{Def}

\begin{Thm}\label{hypemb-sruam} (\cite[Theorem 1.5]{Osi06b})
Let $G$ be a group which is hyperbolic relative to a family $\bbk$ of subgroups. Then a subgroup $H$ of $G$ is hyperbolically embedded into $G$ relative to $\bbk$ if and only if $H$ is strongly undistorted relative to $\bbk$ and almost malnormal in $G$. 
\end{Thm}

For finitely generated relatively hyperbolic groups, we have the following characterization of strongly relatively undistorted subgroups. 

\begin{Def}\label{def-sq} (\cite[Definitions 4.9 and 4.11]{Osi06a})
Let $G$ be a group with a finite family $\bbk$ of subgroups. 
Suppose that $G$ is generated by a finite set $X$. 
A subgroup $H$ of $G$ is said to be quasiconvex relative to $\bbk$ in $G$ if there exists a constant $\sigma\ge 0$ satisfying the following: 
if $h_1$ and $h_2$ are elements of $H$ and $p$ is a geodesic from $h_1$ to $h_2$ in $\Gamma(G,X\cup\calk)$, then for every vertex $v$ on $p$, there exists an element $h$ of $H$ such that $d_X(v,h)\le\sigma$. 
A subgroup $H$ of $G$ is said to be strongly quasiconvex relative to $\bbk$ in $G$ if $H$ is quasiconvex relative to $\bbk$ in $G$ and for every element $K$ of $\bbk$ and every element $g$ of $G$, the intersection $H \cap gKg^{-1}$ is finite. 
\end{Def}

\begin{Thm}\label{sru-sq} (\cite[Theorem 4.13]{Osi06a}) 
Let $G$ be a group which is hyperbolic relative to a finite family $\bbk$ of subgroups. Suppose that $G$ is finitely generated. 
Then a subgroup $H$ of $G$ is strongly undistorted relative to $\bbk$ if and only if $H$ is strongly quasiconvex relative to $\bbk$. 
\end{Thm}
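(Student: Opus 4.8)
The plan is to prove the two implications separately, working in the relative Cayley graph $\Gamma(G,X\cup\calk)$ for a finite relative generating set $X$; by relative hyperbolicity this graph is a $\delta$-hyperbolic geodesic space in which geodesics penetrate the cosets $gK$ ($g\in G$, $K\in\bbk$) only in the controlled, bounded-coset-penetration way, and finiteness of $\bbk$ is used freely. Note also that, since a finite generating set $Y$ of $H$ gives a proper metric $d_Y$ and all such metrics are bi-Lipschitz, the choice of $Y$ is immaterial.

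I would begin with the easier implication, that a strongly quasiconvex $H$ (with quasiconvexity constant $\sigma$) is strongly undistorted; this direction is essentially elementary. Given $h\in H$, take a geodesic $1=v_0,v_1,\dots,v_n=h$ of $\Gamma(G,X\cup\calk)$ with $v_{j-1}^{-1}v_j\in X\cup\calk$, choose $h_j\in H$ with $d_X(v_j,h_j)\le\sigma$ (Definition \ref{def-sq}) and with $h_0=1$, $h_n=h$, and telescope: $h=\prod_{j=1}^{n}h_{j-1}^{-1}h_j$, where each factor lies in $H$ and equals $(h_{j-1}^{-1}v_{j-1})(v_{j-1}^{-1}v_j)(v_j^{-1}h_j)$, a product of an element of $d_X$-length $\le\sigma$, a generator in $X\cup\calk$, and an element of $d_X$-length $\le\sigma$. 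If $v_{j-1}^{-1}v_j\in X$ this factor lies in the finite set $H\cap B_X(1,2\sigma+1)$; if $v_{j-1}^{-1}v_j\in K\setminus\{1\}$ it still lies in a finite set, by a pigeonhole argument: were there infinitely many elements of $H$ of the form $a^{-1}kb$ with $a,b$ ranging over the finite ball $B_X(1,\sigma)$ and $k\in K$, then fixing $a,b$ to a value attained infinitely often and taking quotients $a^{-1}k(k')^{-1}a$ would produce an infinite subset of $H\cap a^{-1}Ka$, contradicting strong quasiconvexity. The factors therefore lie in one fixed finite subset $Y\subset H$, which then generates $H$; and applying the same telescoping to a $d_{X\cup\calk}$-geodesic between arbitrary $h_1,h_2\in H$ gives $d_Y(h_1,h_2)\le C\,d_{X\cup\calk}(h_1,h_2)$. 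Combined with $d_{X\cup\calk}\le d_X\le C'd_Y$ on $H$, the inclusion $(H,d_Y)\to(G,d_{X\cup\calk})$ is a quasi-isometric embedding.

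For the converse, let $(H,d_Y)\to(G,d_{X\cup\calk})$ be the quasi-isometric embedding witnessing that $H$ is strongly undistorted. Stability of quasigeodesics in the hyperbolic space $\Gamma(G,X\cup\calk)$ shows that for $h_1,h_2\in H$ every geodesic between them lies in the $\mu$-neighbourhood of $H$ in the metric $d_{X\cup\calk}$, for a uniform $\mu$; moreover each $gKg^{-1}$ has $d_{X\cup\calk}$-diameter at most $2d_{X\cup\calk}(1,g)+1$, so if $H\cap gKg^{-1}$ were infinite it would be an infinite subset of $H$ of bounded $d_{X\cup\calk}$-diameter, which a quasi-isometric embedding of the proper space $(H,d_Y)$ forbids; hence all these intersections are finite. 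What remains, and what I expect to be the main obstacle, is to upgrade ``$\mu$-close in $d_{X\cup\calk}$'' to ``$\sigma$-close in $d_X$'' for a uniform $\sigma$, i.e. to pass from the coarse geometry of $\Gamma(G,X\cup\calk)$, in which peripheral cosets are small, to the word metric $d_X$, in which they are not. The idea is that if a vertex $v$ of a geodesic $p$ between points of $H$ were $d_X$-far from $H$ while $d_{X\cup\calk}$-close to some $h\in H$, then the short $d_{X\cup\calk}$-path from $v$ to $h$, together with $p$ itself near $v$, would both have to travel along a single peripheral coset $gK$; the bounded-coset-penetration properties of geodesic polygons (a side makes no deep excursion into a coset avoided by the others; paths with common endpoints have comparable penetration of each coset) would then place arbitrarily many elements of $H$ in a bounded $d_X$-neighbourhood of $gK$, producing an infinite $H\cap gKg^{-1}$ and contradicting what was just shown. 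Establishing this dichotomy rigorously is where the fine structure theory of relatively hyperbolic groups is genuinely needed; the rest reduces to the standard hyperbolic-space toolkit and bookkeeping.
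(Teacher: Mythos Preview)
The paper does not prove this theorem at all: it is quoted verbatim from Osin's memoir \cite[Theorem 4.13]{Osi06a} and used as a black box throughout. There is therefore no ``paper's own proof'' to compare against; you are supplying an argument for a result the authors simply import.

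On the substance of your attempt: the direction \emph{strongly quasiconvex $\Rightarrow$ strongly undistorted} is carried out correctly, and your telescoping-plus-pigeonhole argument is essentially the standard one. For the converse, your treatment of the finiteness of $H\cap gKg^{-1}$ via properness of $(H,d_Y)$ is fine, and the reduction to upgrading $d_{X\cup\calk}$-closeness to $d_X$-closeness is the right framing. However, the sketch you give for that last step is not yet a proof. The assertion that the short $d_{X\cup\calk}$-path from $v$ to $h$ and the geodesic $p$ ``would both have to travel along a single peripheral coset $gK$'' does not follow from anything you have stated; one needs to compare $p$ not with an arbitrary short path to $H$ but with the image of a $d_Y$-geodesic in $H$, which is a quasigeodesic in $\Gamma(G,X\cup\calk)$ whose consecutive vertices are uniformly $d_X$-close, and then invoke the BCP property to bound the $d_X$-length of $\calk$-components of $p$ in terms of those of this companion path. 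Only after that does the pigeonhole argument (infinitely many $h\in H$ within bounded $d_X$-distance of a fixed coset $gK$ forces, after fixing the ``tail'', infinitely many elements of $H\cap gKg^{-1}$) become available. You flag this step as the main obstacle, which is accurate, but as written the mechanism linking a large $d_X(v,H)$ to a long $\calk$-component of $p$ and then to many $H$-points near one coset is not supplied.
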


\section{Strongly relatively undistorted free subgroups}
\label{sect-sruf}
When a group $G$ is hyperbolic relative to a family $\bbk$ of subgroups, a subgroup of $G$ is said to be parabolic with respect to $\bbk$ if it is conjugate to a subgroup of some element of $\bbk$.
The main purpose of this section is to show the following. 

\begin{Prop}\label{sq'}
Let $G$ be a group which is hyperbolic relative to a family $\bbk$ of proper subgroups and $\Gamma$ a subgroup of $G$ which is neither virtually cyclic nor parabolic with respect to $\bbk$. 
If $\Gamma$ contains an element of infinite order, then it contains a free subgroup $F$ of rank two which is strongly undistorted relative to $\bbk$ in $G$. 
\end{Prop}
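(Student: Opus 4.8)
The plan is to have $G$ act by left translations on the (Gromov-)hyperbolic relative Cayley graph $\mathcal{X}:=\Gamma(G,X\cup\calk)$ and to build $F$ by a ping-pong argument from two loxodromic elements of $\Gamma$ whose axes are transversal and whose translation lengths are so large that the orbit map of $F$ becomes a quasi-isometric embedding. Fix a finite relative generating set $X$ of $G$ with respect to $\bbk$ and recall the standard facts that $\mathcal{X}$ is hyperbolic, that its vertex set is $(G,d_{X\cup\calk})$, and that distinct maximal parabolic subgroups of $G$ meet in finite subgroups and are self-normalizing. Call $g\in G$ \emph{hyperbolic} relative to $\bbk$ if it acts loxodromically on $\mathcal{X}$, equivalently if $g$ has infinite order and is not parabolic; such a $g$ fixes exactly two points $g^{+},g^{-}\in\partial\mathcal{X}$ and lies in a unique maximal virtually infinite cyclic subgroup $E(g)$, which is almost malnormal in $G$, so $N_{G}(E(g))=E(g)$. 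By Definition \ref{def-sru} it then suffices to find a free subgroup $F\le\Gamma$ of rank two, with free basis $Y$, for which the orbit map $(F,d_{Y})\to(G,d_{X\cup\calk})$ is a quasi-isometric embedding.

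The first task is to extract two independent hyperbolic elements from $\Gamma$; this is essentially Osin's analysis of subgroups of relatively hyperbolic groups, so I only sketch it. Let $a\in\Gamma$ have infinite order. If $a$ is parabolic it lies in some maximal parabolic subgroup $P$, and since $\Gamma$ is not parabolic we have $\Gamma\not\le P$; as distinct maximal parabolics meet in finite subgroups and $N_{G}(P)=P$, a conjugate $\gamma a\gamma^{-1}$ with $\gamma\in\Gamma\setminus P$ is an infinite-order element of $\Gamma$ lying in a maximal parabolic $P'\ne P$, and a ping-pong on $\mathcal{X}$ using the distinct parabolic fixed points of $P$ and $P'$ produces a hyperbolic element inside $\langle a,\gamma a\gamma^{-1}\rangle\le\Gamma$. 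So assume $g\in\Gamma$ is hyperbolic. Since $\Gamma$ is not virtually cyclic, $\Gamma\not\le E(g)$; choosing $t\in\Gamma\setminus E(g)$, the element $h_{0}:=tgt^{-1}$ is hyperbolic with $E(h_{0})=tE(g)t^{-1}$, and $E(h_{0})=E(g)$ would force $t\in N_{G}(E(g))=E(g)$, a contradiction; hence $\{g^{+},g^{-}\}\ne\{h_{0}^{+},h_{0}^{-}\}$. Passing to suitable conjugates and positive powers and using the North--South dynamics of loxodromic isometries, I obtain hyperbolic $g,h\in\Gamma$ whose fixed-point pairs $\{g^{+},g^{-}\}$ and $\{h^{+},h^{-}\}$ consist of four distinct points of $\partial\mathcal{X}$.

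Now comes the ping-pong. Put $o:=1$. Because $g$ and $h$ act loxodromically on $\mathcal{X}$ with transversal axes, for $n$ large enough $g^{\pm n}$ and $h^{\pm n}$ displace $o$ by more than a suitable multiple of the hyperbolicity constant $\delta$ of $\mathcal{X}$ and by more than a bound on how far any two of the four axes fellow-travel. A standard ping-pong argument then produces pairwise disjoint subsets $U_{g^{+}},U_{g^{-}},U_{h^{+}},U_{h^{-}}$ of $\mathcal{X}\cup\partial\mathcal{X}$ with $g^{\pm n}\bigl((\mathcal{X}\cup\partial\mathcal{X})\setminus U_{g^{\mp}}\bigr)\subseteq U_{g^{\pm}}$, and likewise for $h$; this shows simultaneously that $F:=\langle g^{n},h^{n}\rangle$ is free of rank two on $Y:=\{g^{n},h^{n}\}$ and that $d_{X\cup\calk}(o,w\cdot o)\ge c\,|w|_{Y}-c'$ for every word $w$ in $Y^{\pm1}$ reduced in the free group, with $c>0$ and $c'$ depending only on $\delta$, the translation lengths, and the axes. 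Since the reverse bound $d_{X\cup\calk}(o,w\cdot o)\le c''|w|_{Y}$ is immediate, the orbit map $(F,d_{Y})\to(G,d_{X\cup\calk})$ is a quasi-isometric embedding, whence $F$ is strongly undistorted relative to $\bbk$ in $G$ by Definition \ref{def-sru}. (As a by-product every nontrivial element of $F$ is loxodromic on $\mathcal{X}$, hence non-parabolic, so $F\cap gKg^{-1}=\{1\}$ for all $g\in G$ and all $K\in\bbk$; when $G$ is finitely generated this means $F$ is even strongly quasiconvex relative to $\bbk$, by Theorem \ref{sru-sq}.)

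I expect the main obstacle to be this last ping-pong estimate: one must choose the exponent $n$ uniformly large enough that the attracting sets are genuinely disjoint and that the linear lower bound on $d_{X\cup\calk}(o,w\cdot o)$ holds with constants independent of the reduced word $w$, which amounts to quantifying the interplay between the hyperbolicity constant $\delta$, the translation lengths $\tau(g^{n})$ and $\tau(h^{n})$, and the length along which the four axes fellow-travel. Promoting a parabolic infinite-order element of $\Gamma$ to a hyperbolic one is the other place that needs care, but it uses only the standard structure of maximal parabolic subgroups recalled in the first paragraph, and can alternatively be quoted directly from Osin.
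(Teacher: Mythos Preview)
Your approach is correct and takes a genuinely different route from the paper's. The paper never argues by ping-pong on the relative Cayley graph; instead it first reduces to the case where $G$ is finitely generated, via Osin's structural splitting of $G$ over a finitely generated subgroup $Q$ hyperbolic relative to a finite subfamily $\bbl$ (Lemmas \ref{fgru}--\ref{fgru'}), and then in that case invokes Mart\'inez-Pedroza's combination theorem (Lemma \ref{mp}) to amalgamate two carefully chosen infinite cyclic subgroups $\langle q\rangle$ and $\langle h^k\rangle$ into a strongly quasiconvex free group of rank two (Lemma \ref{sq'fg}). Your argument is more direct and geometric: it bypasses both the reduction to finitely generated $G$ and the combination-theorem machinery, at the price of the quantitative ping-pong estimate you flag at the end (which is indeed standard for independent loxodromics on any $\delta$-hyperbolic space). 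The paper's route, by contrast, produces strong relative quasiconvexity in the finitely generated case as a structural byproduct, and this feeds cleanly into the proof of Theorem \ref{hypemb} in Section \ref{sect-proof}.

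One correction to your sketch: in the step where the given infinite-order element $a\in\Gamma$ is parabolic, the ping-pong ``on $\mathcal{X}$ using the distinct parabolic fixed points of $P$ and $P'$'' does not work as written, because parabolic elements act \emph{elliptically} on $\mathcal{X}=\Gamma(G,X\cup\calk)$ (each peripheral coset has diameter $1$ there), so $P$ and $P'$ have bounded orbits and no fixed points in $\partial\mathcal{X}$. You should quote \cite[Lemma 4.4]{Osi06b} for this step, exactly as you yourself suggest and as the paper does in Lemma \ref{hyp-element}, or else run that particular ping-pong on the Bowditch boundary rather than on $\partial\mathcal{X}$.
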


Proposition \ref{sq'} yields the following corollary. 

\begin{Cor}\label{sq}
Let $G$ be a group which is not virtually cyclic and is hyperbolic relative to a family $\bbk$ of proper subgroups. 
Then $G$ contains a free subgroup $F$ of rank two which is strongly undistorted relative to $\bbk$ in $G$. 
\end{Cor}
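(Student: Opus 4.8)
The plan is to deduce Corollary \ref{sq} from Proposition \ref{sq'} by applying the latter with $\Gamma = G$. What has to be checked is that $G$, viewed as a subgroup of itself, satisfies all three hypotheses of Proposition \ref{sq'}: that it is not virtually cyclic, that it is not parabolic with respect to $\bbk$, and that it contains an element of infinite order. Once these are verified, the free subgroup $F$ of rank two furnished by Proposition \ref{sq'} is the desired strongly undistorted subgroup, and there is nothing more to prove.

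The first hypothesis is exactly the standing assumption of the corollary. For the second, observe that every $K \in \bbk$ is a proper subgroup of $G$, hence so is every conjugate $gKg^{-1}$; since $G$ is certainly not contained in any proper subgroup of itself, $G$ is not parabolic with respect to $\bbk$ in the sense defined at the start of Section \ref{sect-sruf}. For the third, note that since $G$ is not virtually cyclic it is in particular infinite (a finite group has the trivial subgroup as a finite-index cyclic subgroup, hence is virtually cyclic). By \cite[Corollaries 1.7 and 4.5]{Osi06b}, an infinite group which is hyperbolic relative to a family of proper subgroups contains a virtually infinite cyclic subgroup; such a subgroup contains an element of infinite order, and so does $G$.

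With all three hypotheses confirmed, Proposition \ref{sq'} applies with $\Gamma = G$ and yields the conclusion at once. The only point here that is not a pure unwinding of definitions is the production of an infinite-order element, and even that is immediate once one quotes Osin's result on virtually infinite cyclic hyperbolically embedded subgroups; so I do not expect any real obstacle in this argument. The genuine content of the statement lies entirely in Proposition \ref{sq'}, whose proof is the substantive task and is carried out separately.
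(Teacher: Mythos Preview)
Your proposal is correct and follows essentially the same route as the paper: apply Proposition~\ref{sq'} with $\Gamma=G$, the only nontrivial verification being the existence of an infinite-order element, which you (like the paper) obtain from Osin's \cite[Corollary~4.5]{Osi06b}. Your write-up is more explicit in checking that $G$ is not parabolic with respect to $\bbk$, but this is just spelling out what the paper leaves implicit.
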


\begin{proof}[Proof of Corollary \ref{sq} using Proposition \ref{sq'}]
It follows from \cite[Corollary 4.5]{Osi06b} that the group $G$ contains an element of infinite order. 
Hence the assertion follows from Proposition \ref{sq'}. 
\end{proof}

For the proof of Proposition \ref{sq'}, we prepare several lemmas. 

When a group $G$ is hyperbolic relative to a family $\bbk$ of proper subgroups, an element $g$ of $G$ is said to be parabolic with respect to $\bbk$ if it is conjugate to an element of a subgroup of $G$ which belongs to $\bbk$. 
Otherwise $g$ is said to be hyperbolic with respect to $\bbk$. 

\begin{Lem}\label{hyp-element}
Let $G$ be a group which is hyperbolic relative to a family $\bbk$ of proper subgroups and $\Gamma$ a subgroup of $G$ which is neither virtually cyclic nor parabolic with respect to $\bbk$. 
Suppose that either $G$ is countable and $\bbk$ is finite or $\Gamma$ contains an element of infinite order. 
Then there exists an element $h$ of $\Gamma$ which is of infinite order and hyperbolic with respect to $\bbk$. 
\end{Lem}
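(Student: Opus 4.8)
The plan is to reduce to the case where $\Gamma$ already contains an infinite-order element, and then to combine an element of infinite order in $\Gamma$ with the non-parabolicity and non-virtual-cyclicity of $\Gamma$ to manufacture a hyperbolic element. First I would dispose of the case ``$G$ countable and $\bbk$ finite'': in that setting, results of Osin (e.g.\ \cite[Theorem 1.1 and Corollary 4.5]{Osi06b}) guarantee that any non-virtually-cyclic subgroup that is not parabolic must contain an element of infinite order (a non-virtually-cyclic relatively hyperbolic group cannot be torsion, and more precisely $\Gamma$ itself, being non-parabolic, acts on the associated hyperbolic space with a loxodromic-type element). So in all cases we may assume from the outset that $\Gamma$ contains some $g\in\Gamma$ of infinite order.

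Next I would split according to whether $g$ is hyperbolic or parabolic with respect to $\bbk$. If $g$ is hyperbolic we are done, taking $h=g$. If $g$ is parabolic, then $g$ lies in some conjugate $aKa^{-1}$ with $K\in\bbk$; since $\bbk$ consists of proper subgroups and $\Gamma$ is not parabolic, $\Gamma$ is not contained in $aKa^{-1}$, so there is $\gamma\in\Gamma\setminus aKa^{-1}$. Now I would use the almost malnormality of the parabolic subgroups in a relatively hyperbolic group: distinct conjugates of elements of $\bbk$ have finite intersection, so $aKa^{-1}\cap \gamma(aKa^{-1})\gamma^{-1}$ is finite. The idea is that the element $\gamma g\gamma^{-1}$ lies in a \emph{different} conjugate of $K$ than $g$ does, and then a suitable product such as $h=g\cdot(\gamma g^n\gamma^{-1})$, or $h=g\gamma g\gamma^{-1}$, for appropriate power, must escape every conjugate of every $K\in\bbk$: an infinite-order element of a product of two ``independent'' parabolics is hyperbolic. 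Concretely, one controls the relative word length / translation length: a product of elements from two parabolic subgroups with finite common intersection has positive translation length in $\Gamma(G,X\cup\calk)$ for appropriate $X$, hence is hyperbolic; and by choosing the power $n$ large (or by passing to a further power of $h$) one also ensures $h$ has infinite order. This last point may need the observation that $h$ cannot be finite order, since a finite subgroup of a relatively hyperbolic group is conjugate into some $K\in\bbk$ (elliptic in the relative Cayley graph), contradicting that $h$ was built to be non-elliptic.

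The main obstacle I expect is the gluing step: showing that the product of two infinite-order parabolic elements lying in conjugates $aKa^{-1}$ and $bLb^{-1}$ with $aKa^{-1}\ne bLb^{-1}$ is actually hyperbolic with respect to $\bbk$, rather than landing back inside some third conjugate of a parabolic. This is the heart of the argument and is presumably handled via a normal-form or ``path in the coned-off graph'' argument: a bi-infinite path spelling $\ldots (g^{n})(\gamma g^{n}\gamma^{-1})(g^{n})\cdots$ alternates between two distinct parabolic cosets and, using that consecutive cosets are distinct and that long geodesics in relatively hyperbolic Cayley graphs cannot backtrack through the same coset twice without penalty (the BCP / bounded coset penetration property, or Osin's fineness/ hyperbolicity estimates), one concludes the path is a quasigeodesic, so $h$ translates along it and is loxodromic, i.e.\ hyperbolic. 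Care is needed because $\gamma$ itself is a group element of unknown type, so one should conjugate carefully and may need to further modify $h$ by an element of $\Gamma$ or pass to a power; also, to keep $h$ inside $\Gamma$ one must only ever multiply elements already in $\Gamma$, which is why $g,\gamma\in\Gamma$ and $h$ is built as a word in $g$ and $\gamma$.
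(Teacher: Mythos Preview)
Your two-case structure matches the paper's proof, and in the second case you correctly locate $\gamma\in\Gamma\setminus aKa^{-1}$. The paper, however, is more economical on both fronts.

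For the case ``$G$ countable and $\bbk$ finite'' the paper does \emph{not} reduce to the second case: it invokes the geometrically finite convergence action of $G$ (available precisely under these hypotheses), observes that the restriction to $\Gamma$ is a non-elementary convergence action (since $\Gamma$ is neither virtually cyclic nor parabolic), and applies Tukia \cite[Theorem~2T]{Tuk94} to obtain a loxodromic---hence hyperbolic and infinite-order---element of $\Gamma$ directly. Your citations to \cite[Theorem~1.1, Corollary~4.5]{Osi06b} do not do the job here: those results concern $G$ itself, not an arbitrary subgroup $\Gamma$, and $\Gamma$ need not inherit any relatively hyperbolic structure. Your parenthetical remark about a loxodromic-type element is the correct instinct; make \emph{that} the argument.

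For the second case, the ``main obstacle'' you identify is exactly \cite[Lemma~4.4]{Osi06b}: if $h\in aKa^{-1}$ has infinite order and $\gamma\notin aKa^{-1}$, then for some integer $n$ the element $\gamma h^{n}$ is of infinite order and hyperbolic with respect to $\bbk$. The paper simply cites this lemma; there is no need to set up a BCP or ping-pong argument yourself, and the simple product $\gamma h^{n}$ already lies in $\Gamma$---no conjugates of $h$ are required. Finally, your fallback for ruling out finite order is incorrect: it is not true that every finite subgroup of a relatively hyperbolic group is conjugate into some $K\in\bbk$ (take any hyperbolic group with torsion, relative to $\bbk=\emptyset$), so that step would fail as written.
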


\begin{proof}
First suppose that $G$ is countable and $\bbk$ is finite. 
Then we can consider a geometrically finite convergence action of $G$ on a compact metrizable space such that the set of all maximal parabolic subgroups of the action is equal to the collection of all conjugates of elements of $\bbk$ which are infinite (see for example \cite[Definition 3.1]{Hru10}). 
Since $\Gamma$ is neither virtually cyclic nor parabolic with respect to $\bbk$, the restriction of this action to $\Gamma$ is a non-elementary convergence action. 
Hence $\Gamma$ contains an element $h$ which is loxodromic with respect to this action (see \cite[Theorem 2T]{Tuk94}). 

Next suppose that $\Gamma$ contains an element $h$ of infinite order. 
We only have to consider the case where $h$ belongs to the conjugate $gKg^{-1}$ for some element $K$ of $\bbk$ and some element $g$ of $G$. 
Since $\Gamma$ is not parabolic with respect to $\bbk$, we can take an element $\gamma$ of $\Gamma\setminus gKg^{-1}$. 
By \cite[Lemma 4.4]{Osi06b}, there exists an integer $n$ such that the element $\gamma h^n$ of $\Gamma$ is of infinite order and hyperbolic with respect to $\bbk$. 
\end{proof}

\begin{Lem}\label{hyp-hypemb} (\cite[Theorem 4.3 and Corollary 1.7]{Osi06b})
Let $G$ be a group which is hyperbolic relative to a family $\bbk$ of subgroups and $h$ be an element of $G$ which is of infinite order and hyperbolic with respect to $\bbk$. 
Then there exists a unique subgroup $E(h)$ of $G$ such that $E(h)$ is virtually cyclic, contains $h$ and maximal among such subgroups of $G$. 
Moreover $E(h)$ is hyperbolically embedded into $G$ relative to $\bbk$. 
\end{Lem}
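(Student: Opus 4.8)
Since this lemma is quoted from \cite[Theorem 4.3 and Corollary 1.7]{Osi06b}, the quickest route is simply to invoke those results; here I outline the argument I would follow. Fix a finite relative generating set $X$ of $G$ with respect to $\bbk$, so that $\Gamma(G,X\cup\calk)$ is a hyperbolic graph on which $G$ acts by isometries. First I would argue that $h$ acts as a loxodromic isometry: every conjugate of every member of $\bbk$ has bounded diameter in $\Gamma(G,X\cup\calk)$ (cosets of a member have diameter one), so a parabolic element has a bounded orbit, whereas a hyperbolic element of infinite order must be loxodromic because the action of $G$ on $\Gamma(G,X\cup\calk)$ is acylindrical; this is the substantive external input, supplied by \cite{Osi06a,Osi06b}. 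Thus $h$ has a well-defined pair of fixed points $h^{+\infty},h^{-\infty}$ on $\partial\Gamma(G,X\cup\calk)$, and I would set
$$E(h)=\{g\in G:\{gh^{+\infty},gh^{-\infty}\}=\{h^{+\infty},h^{-\infty}\}\},$$
a subgroup of $G$ containing $h$.

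Next I would show that $E(h)$ is virtually cyclic and is the unique maximal virtually cyclic subgroup of $G$ containing $h$. For virtual cyclicity: the quasi-geodesic axis of $h$ is coarsely $E(h)$-invariant, so the coarse action of $E(h)$ on it yields a homomorphism from $E(h)$ into the isometry group of $\Z$ (allowing reversal of orientation) whose image is infinite and whose kernel consists of elements displacing every point of the axis a uniformly bounded amount; acylindricity then forces this kernel to be finite, so $E(h)$ is virtually cyclic. For maximality and uniqueness: if $V$ is any virtually cyclic subgroup with $h\in V$, then for each $v\in V$ the subgroups $\langle h\rangle$ and $v\langle h\rangle v^{-1}$ both have finite index in $V$, hence meet in an infinite subgroup, and the north--south dynamics of $h$ on $\partial\Gamma(G,X\cup\calk)$ then force $\{vh^{+\infty},vh^{-\infty}\}=\{h^{+\infty},h^{-\infty}\}$, i.e. $v\in E(h)$. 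Therefore $V\le E(h)$, so $E(h)$ is itself virtually cyclic and contains every virtually cyclic subgroup containing $h$.

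Finally, to see that $E(h)$ is hyperbolically embedded into $G$ relative to $\bbk$ I would check the two conditions of Theorem \ref{hypemb-sruam}. For almost malnormality, if $g\in G$ and $E(h)\cap gE(h)g^{-1}$ were infinite, it would have finite index in both $E(h)$ and $gE(h)g^{-1}$ and hence contain a common power of $h$ and of $ghg^{-1}$; comparing translation lengths along the two quasi-axes then yields $\{gh^{+\infty},gh^{-\infty}\}=\{h^{+\infty},h^{-\infty}\}$, so $g\in E(h)$, which shows $E(h)\cap gE(h)g^{-1}$ is finite for $g\notin E(h)$. For strong relative undistortedness, the cyclic group $\langle h\rangle$ is a quasi-geodesic, hence undistorted in $\Gamma(G,X\cup\calk)$, and it sits in $E(h)$ with finite index, so the inclusion $(E(h),d_Y)\to(G,d_{X\cup\calk})$ is a quasi-isometric embedding; moreover, for $K\in\bbk$ and $g\in G$ the intersection $E(h)\cap gKg^{-1}$ cannot be infinite, since an infinite subgroup of the virtually cyclic group $E(h)$ contains a power of $h$, which being hyperbolic is not conjugate into $K$. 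The main obstacle is exactly the pair of facts one borrows from Osin --- that a hyperbolic element of infinite order acts loxodromically on $\Gamma(G,X\cup\calk)$, and that this action is sufficiently acylindrical to make the kernel above finite; granting these, the rest is routine boundary-dynamics bookkeeping together with Theorem \ref{hypemb-sruam}.
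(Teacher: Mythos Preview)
The paper gives no proof of this lemma at all---it is stated purely as a citation of \cite[Theorem 4.3 and Corollary 1.7]{Osi06b}, and the text moves immediately on to the next lemma. Your outline is a reasonable and essentially correct sketch of how one would prove the result from scratch, and it follows the spirit of Osin's original argument (loxodromic action on the relative Cayley graph, positive translation length, definition of $E(h)$ via the limit set). One caveat: you lean on acylindricity of the $G$-action on $\Gamma(G,X\cup\calk)$ to get finiteness of the kernel, but this fact (in that language) postdates \cite{Osi06b}; Osin's 2006 argument instead establishes directly that the translation number of $h$ is positive and that elements commensurating $\langle h\rangle$ form a virtually cyclic group, without invoking acylindricity as a black box. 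Since you explicitly flag these as the ``substantive external input'' borrowed from Osin, your proposal is honest about its dependencies and matches the paper's own stance of simply citing the result.
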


The following lemma is shown by a similar argument in the proof of \cite[Corollary 1.12]{MP12}. 

\begin{Lem}\label{mp}
Let $G$ be a group generated by a finite set $X$ and hyperbolic relative to a finite family $\bbk$ of subgroups. 
Let a subgroup $H$ of $G$ be hyperbolically embedded into $G$ relative to $\bbk$. 
We denote the union $\bbk\cup\{H\}$ by $\bbh$. 
Suppose that a subgroup $Q$ of $G$ is strongly quasiconvex relative to $\bbh$ in $G$. 
Then there exists a constant $C(Q,H)\ge 0$ with the following property:
for every subgroup $R$ of $H$ such that 
\begin{enumerate}
\setlength{\itemsep}{0mm}
\item[(a)] $Q\cap H\subset R$;
\item[(b)] $d_X(1, r) \ge C(Q,H)$ for every element $r$ of $R\setminus Q$; 
\item[(c)] the subgroup $R$ is quasiconvex relative to $\bbk$ in $G$,
\end{enumerate}
the natural homomorphism $Q*_{Q\cap R}R\to G$ is injective and its image $\langle Q\cup R \rangle$ is strongly quasiconvex relative to $\bbk$ in $G$. 
\end{Lem}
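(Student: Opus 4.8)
The plan is to reproduce, in the present notation, the argument in the proof of \cite[Corollary 1.12]{MP12}. Since $H$ is hyperbolically embedded, $G$ is hyperbolic relative to the finite family $\bbh$; write $\mathcal{H}=\bigcup_{L\in\bbh}L\setminus\{1\}$. First I would record what the hypotheses give geometrically. As $Q$ is strongly quasiconvex relative to $\bbh$, geodesics of the hyperbolic graph $\Gamma(G,X\cup\mathcal{H})$ between two elements of $Q$ stay within $d_X$-distance $\sigma_Q$ of $Q$, and $Q\cap gLg^{-1}$ is finite for every $L\in\bbh$ and $g\in G$; in particular $Q\cap H$ is finite and, by hypothesis (a), equals the edge group $Q\cap R$, while also $Q\cap gKg^{-1}$ is finite for every $K\in\bbk$ and $g\in G$ since $\bbk\subset\bbh$. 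By (c), $R$-geodesics in $\Gamma(G,X\cup\calk)$ stay within $d_X$-distance $\sigma_R$ of $R$; and $R\cap gKg^{-1}\le H\cap gKg^{-1}$ is finite for every $K\in\bbk$ because distinct peripheral subgroups of $(G,\bbh)$ have finite intersection. I would then take $C(Q,H)$ to be a constant larger than an explicit combination of $\sigma_Q$, $\sigma_R$, the $d_X$-diameter of $Q\cap H$, the hyperbolicity constants of $\Gamma(G,X\cup\mathcal{H})$ and $\Gamma(G,X\cup\calk)$, and Osin's bounded coset penetration constants for the inclusion $\Gamma(G,X\cup\calk)\subset\Gamma(G,X\cup\mathcal{H})$; its precise value will be forced by the estimates below and does not depend on $R$.

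Next, fix $R\le H$ satisfying (a)--(c) and take $w\in Q*_{Q\cap R}R$ with a reduced syllable decomposition $w=s_1\cdots s_n$, consecutive factors lying alternately in $Q\setminus R$ and $R\setminus Q$ (the cases $n\le 1$ being immediate, since $Q$ and $R$ inject and agree on $Q\cap R$). I would associate to $w$ a path $P_w$ from $1$ to $w$ in $\Gamma(G,X\cup\mathcal{H})$ whose $i$-th piece joins the partial products $s_1\cdots s_{i-1}$ and $s_1\cdots s_i$ and is: a geodesic of $\Gamma(G,X\cup\mathcal{H})$ when $s_i\in Q$; the single $\mathcal{H}$-edge, lying in a coset of $H$, when $s_i\in R\subset\mathcal{H}$. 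The heart of the proof is the claim that $P_w$ is a $(\lambda,c)$-quasigeodesic without backtracking in $\Gamma(G,X\cup\mathcal{H})$, with $\lambda,c$ independent of $w$ and $R$. To see this one invokes the standard relatively hyperbolic ``no backtracking'' machinery, whose local inputs at each junction are supplied exactly by (a)--(c): by (a), two consecutive $\mathcal{H}$-edges coming from $R$-syllables lie in distinct cosets of $H$, since these coincide only when the intervening $Q$-syllable lies in $Q\cap H=Q\cap R$, impossible for a reduced word; a $Q$-geodesic piece meets any coset of an element of $\bbh$ in at most one edge, and a $Q$-coset meets an $H$-coset coarsely in a set of $d_X$-diameter controlled by $Q\cap H$, so no two pieces of $P_w$ fellow-travel long enough to produce a shortcut, \emph{provided} the $R$-syllables are long enough --- and by (b) every $R$-syllable has $d_X$-length at least $C(Q,H)$, which I would arrange to dominate all the relevant thresholds. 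Applying the claim to a reduced $w\ne 1$ forces $P_w$ to be non-constant, hence $w\ne 1$ in $G$; so $Q*_{Q\cap R}R\to G$ is injective with image $\langle Q\cup R\rangle$.

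To conclude that $\langle Q\cup R\rangle$ is strongly quasiconvex relative to $\bbk$, I would ``un-cone'' $H$: in $P_w$ replace each $\mathcal{H}$-edge coming from an $R$-syllable by a geodesic of $\Gamma(G,X\cup\calk)$ between the same endpoints, which by (c) stays within $d_X$-distance $\sigma_R$ of a coset of $R$ and by (b) has $d_X$-length at least $C(Q,H)$. Combining the non-backtracking quasigeodesic property of $P_w$ with this length bound, Osin's estimates relating the two relative generating sets give that the resulting path $P'_w$ from $1$ to $w$ is a uniform quasigeodesic in $\Gamma(G,X\cup\calk)$; as its vertices lie within bounded $d_X$-distance of $\langle Q\cup R\rangle$, stability of quasigeodesics in the hyperbolic graph $\Gamma(G,X\cup\calk)$ shows that $\langle Q\cup R\rangle$ is quasiconvex relative to $\bbk$. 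For the remaining finiteness condition, suppose $P\le\langle Q\cup R\rangle$ is infinite and conjugate into $gKg^{-1}$ for some $K\in\bbk$. Acting on the Bass--Serre tree of the splitting $\langle Q\cup R\rangle=Q*_{Q\cap R}R$, the subgroup $P$ must fix a vertex, hence be conjugate in $\langle Q\cup R\rangle$ into $Q$ or into $R$; but $Q\cap g'Kg'^{-1}$ and $R\cap g'Kg'^{-1}$ are finite for every $g'$, as recorded above, a contradiction. Hence $\langle Q\cup R\rangle$ is strongly quasiconvex relative to $\bbk$.

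I expect the central claim of the second step to be the main obstacle: showing that the broken paths $P_w$ are uniform non-backtracking quasigeodesics, and fixing $C(Q,H)$ so that this works simultaneously for every admissible $R$. The work is to fuse the finiteness of $Q\cap H$, the $\sigma_Q$-quasiconvexity of the $Q$-pieces, the $\bbk$-controlled geometry of the $R$-pieces and the bounded coset penetration estimates into a single inequality uniform in $R$ --- precisely the estimate carried out in the proof of \cite[Corollary 1.12]{MP12}, which I would transcribe here, checking that (a)--(c) provide exactly the three local inputs needed: reducedness surviving the passage to $G$, deep penetration of the $R$-syllables into cosets of $H$, and relative quasiconvexity of the $R$-portions.
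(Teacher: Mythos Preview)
Your approach differs from the paper's in a structural way. The paper does not redo the broken-path estimate at all: it quotes \cite[Theorem~5.12]{MP12} as a black box, which already supplies the constant $C(Q,H)$ and guarantees, from (a) and (b) alone, that $Q*_{Q\cap R}R\hookrightarrow G$ with image quasiconvex relative to $\bbh$ and with every intersection $\langle Q\cup R\rangle\cap gH'g^{-1}$ ($H'\in\bbh$) either finite or $\langle Q\cup R\rangle$-conjugate to $R$. The upgrade to $\bbk$-quasiconvexity is then a one-line application of \cite[Theorem~1.1~(2)]{MP12}: an $\bbh$-quasiconvex subgroup is $\bbk$-quasiconvex as soon as its intersections with conjugates of $H$ are $\bbk$-quasiconvex, and here those intersections are finite or conjugate to $R$, which is $\bbk$-quasiconvex by (c). Finiteness of $\langle Q\cup R\rangle\cap gKg^{-1}$ for $K\in\bbk$ then follows because in the non-finite case this intersection equals some $sRs^{-1}\subset sHs^{-1}\cap gKg^{-1}$, which is finite since $H$ is hyperbolically embedded relative to $\bbk$. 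So the paper's proof cleanly separates the roles of the three hypotheses: (a) and (b) feed into $C(Q,H)$ and the $\bbh$-combination theorem; (c) is used only afterwards, for the descent from $\bbh$ to $\bbk$.

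Your write-up blurs this separation, and that creates one genuine problem: you list $\sigma_R$ among the quantities determining $C(Q,H)$, but $C(Q,H)$ must be fixed \emph{before} $R$ is chosen, so it cannot depend on any quasiconvexity constant of $R$. The correct bookkeeping is that (a) and (b) alone force the quasigeodesic constants of your paths $P_w$ in $\Gamma(G,X\cup\mathcal{H})$, uniformly in $R$; condition (c) enters only in the ``un-coning'' step, and there the $\bbk$-quasiconvexity constant of $\langle Q\cup R\rangle$ is allowed to depend on $\sigma_R$. Once you rewrite the argument to respect this, your un-coning route can be made to work, but it is doing by hand what \cite[Theorem~1.1~(2)]{MP12} packages. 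Your Bass--Serre step also needs a word of justification: that an infinite $P\le gKg^{-1}$ must fix a vertex is not automatic from the tree action alone; it holds because a tree-hyperbolic element of $\langle Q\cup R\rangle$ would, via your quasigeodesics $P_{w}$, have unbounded orbit in $\Gamma(G,X\cup\mathcal{H})$, whereas conjugates of elements of $K\in\bbk\subset\bbh$ have bounded orbit there. The paper avoids this detour by reading off the parabolic intersections directly from the conclusion of \cite[Theorem~5.12]{MP12}.
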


\begin{proof}
By \cite[Theorem 5.12]{MP12}, there exists a constant $C(Q,H)\ge 0$ with the following property:
for every subgroup $R$ of $H$ satisfying above conditions (a) and (b), the natural homomorphism $Q*_{Q\cap R}R\to G$ is injective, its image $\langle Q\cup R \rangle$ is quasiconvex relative to $\bbh$ in $G$ and for every element $g$ of $G$ and every element $H'$ of $\bbh$, the intersection $\langle Q\cup R \rangle\cap gH'g^{-1}$ is either finite or conjugate to $R$ in $\langle Q\cup R \rangle$. 

Now we suppose that $R$ satisfies above condition (c) and show that $\langle Q\cup R \rangle$ is strongly quasiconvex relative to $\bbk$ in $G$. 

First we show that $\langle Q\cup R \rangle$ is quasiconvex relative to $\bbk$ in $G$. 
By \cite[Theorem 1.1 (2)]{MP12}, it suffices to show that for every element $g$ of $G$, the intersection $\langle Q\cup R \rangle\cap gHg^{-1}$ is quasiconvex relative to $\bbk$ in $G$. 
Every finite subgroup of $G$ is automatically quasiconvex relative to $\bbk$ in $G$. 
Since $R$ is quasiconvex relative to $\bbk$ in $G$, it is well-known that every conjugate of $R$ is also quasiconvex relative to $\bbk$ in $G$. 
Thus the subgroup $\langle Q\cup R \rangle$ is quasiconvex relative to $\bbk$ in $G$. 

Next we show that for every element $g$ of $G$ and every element $K$ of $\bbk$, the intersection $\langle Q\cup R \rangle\cap gKg^{-1}$ is finite. 
We have only to consider the case where there exists an element $s$ of $\langle Q\cup R \rangle$ such that $\langle Q\cup R \rangle\cap gKg^{-1}$ is equal to $sRs^{-1}$. 
Then $\langle Q\cup R \rangle\cap gKg^{-1}$ is contained in $s(s^{-1}gK(s^{-1}g)^{-1}\cap H)s^{-1}$. 
Since $H$ is hyperbolically embedded into $G$ relative to $\bbk$, the intersection $s^{-1}gK(s^{-1}g)^{-1}\cap H$ is finite. 
Hence $\langle Q\cup R \rangle\cap gKg^{-1}$ is also finite. 

Thus $\langle Q\cup R \rangle$ is strongly quasiconvex relative to $\bbk$ in $G$. 
\end{proof}

By using the above lemmas, we prove the following, which implies Proposition \ref{sq'} for finitely generated groups. 

\begin{Lem}\label{sq'fg}
Let $G$ be a group which is hyperbolic relative to a finite family $\bbk$ of proper subgroups and $\Gamma$ a subgroup of $G$ which is neither virtually cyclic nor parabolic with respect to $\bbk$. 
Suppose that $G$ is finitely generated. 
Then $\Gamma$ contains a free subgroup $F$ of rank two which is strongly quasiconvex relative to $\bbk$ in $G$. 
\end{Lem}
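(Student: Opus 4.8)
The plan is to realize $F$ as a free product of two infinite cyclic subgroups of $\Gamma$, generated by two ``transverse'' hyperbolic elements, and to read off its relative quasiconvexity from Lemma \ref{mp}. First I would note that $G$, being finitely generated, is countable, and $\bbk$ is finite, so Lemma \ref{hyp-element} applies to $\Gamma$ and produces an element $h\in\Gamma$ of infinite order that is hyperbolic with respect to $\bbk$. By Lemma \ref{hyp-hypemb}, the maximal virtually cyclic subgroup $H:=E(h)$ containing $h$ is hyperbolically embedded into $G$ relative to $\bbk$; by Definition \ref{def-hypemb} this says precisely that $G$ is hyperbolic relative to the finite family $\bbh:=\bbk\cup\{H\}$.

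Next I would produce a second hyperbolic element, this time with respect to $\bbh$. The subgroup $\Gamma$ is not virtually cyclic by hypothesis, and it is not parabolic with respect to $\bbh$ either: if it were conjugate into a member of $\bbh$, that member would either lie in $\bbk$ (contradicting that $\Gamma$ is not parabolic with respect to $\bbk$) or equal $H$ (forcing $\Gamma$ to be virtually cyclic). So Lemma \ref{hyp-element}, applied now to $(G,\bbh,\Gamma)$, yields $g\in\Gamma$ of infinite order that is hyperbolic with respect to $\bbh$. Put $Q:=\langle g\rangle$. By Lemma \ref{hyp-hypemb} for the family $\bbh$, the group $E_{\bbh}(g)$ is hyperbolically embedded into $G$ relative to $\bbh$, hence strongly undistorted relative to $\bbh$ by Theorem \ref{hypemb-sruam}, hence strongly quasiconvex relative to $\bbh$ by Theorem \ref{sru-sq} (using that $G$ is finitely generated and $\bbh$ finite). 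Since $Q$ has finite index in the virtually cyclic group $E_{\bbh}(g)$, it too is strongly quasiconvex relative to $\bbh$: relative quasiconvexity passes to finite-index subgroups, and $Q\cap g'Kg'^{-1}\subseteq E_{\bbh}(g)\cap g'Kg'^{-1}$ is finite for all $K\in\bbh$ and $g'\in G$. I would also record that $Q\cap H=\{1\}$, since otherwise $g$ would have a nontrivial power inside the virtually cyclic group $E(h)$, forcing $g\in E(h)=H$ and contradicting that $g$ is hyperbolic with respect to $\bbh$.

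With $Q$ and $H$ in hand, I would feed Lemma \ref{mp} the subgroup $R:=\langle h^{n}\rangle$ for a sufficiently large integer $n$. Condition (a) holds because $Q\cap H=\{1\}\subseteq R$, which also identifies $R\setminus Q$ with $\langle h^{n}\rangle\setminus\{1\}$. For condition (b), the $C(Q,H)$-ball around $1$ in $\Gamma(G,X)$ is finite and $h$ has infinite order, so $\{m\in\Z: d_X(1,h^{m})\le C(Q,H)\}$ is a finite set; choosing $n$ larger than the absolute value of each of its elements gives $d_X(1,h^{nk})>C(Q,H)$ for every $k\ne 0$. For condition (c), $R$ has finite index in $E(h)$, which is quasiconvex relative to $\bbk$ in $G$ (being hyperbolically embedded relative to $\bbk$), so $R$ is quasiconvex relative to $\bbk$ in $G$ as well. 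Lemma \ref{mp} then tells me that the natural map $Q*_{Q\cap R}R\to G$ is injective and that its image $\langle Q\cup R\rangle=\langle g,h^{n}\rangle$ is strongly quasiconvex relative to $\bbk$ in $G$. Since $Q\cap R\subseteq Q\cap H=\{1\}$, the source is $\langle g\rangle*\langle h^{n}\rangle\cong\Z*\Z$, so $F:=\langle g,h^{n}\rangle$ is a free subgroup of rank two; and $F\le\Gamma$ because $g,h\in\Gamma$. This would complete the proof.

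The genuinely substantial input — that an amalgam of two transverse relatively quasiconvex subgroups is again relatively quasiconvex — is already isolated in Lemma \ref{mp} (taken from \cite{MP12}), so the work above is mainly a matter of manufacturing the two hyperbolic elements in the right peripheral structures and checking the three hypotheses. The one step needing attention, and the only place where ``large $n$'' is used, is hypothesis (b); the conceptual device that makes everything fit is the enlargement of the peripheral family to $\bbh$ in the first step, which encodes the transversality of $\langle g\rangle$ to $H$ exactly as Lemma \ref{mp} requires.
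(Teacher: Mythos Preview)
Your proof is correct and follows essentially the same route as the paper: produce a hyperbolic element $h\in\Gamma$ with respect to $\bbk$, enlarge the peripheral structure to $\bbh=\bbk\cup\{E(h)\}$, find a second element of $\Gamma$ hyperbolic with respect to $\bbh$, and then apply Lemma~\ref{mp} with $Q$ the cyclic group on that element and $R=\langle h^n\rangle$ for large $n$. The only cosmetic difference is that the paper deduces $Q\cap H=\{1\}$ directly from the strong quasiconvexity of $Q$ relative to $\bbh$ (which already forces $Q\cap H$ to be finite) together with the torsion-freeness of $Q$, rather than via the maximality of $E(h)$.
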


\begin{proof}
By Lemma \ref{hyp-element}, there exists an element $h$ of $\Gamma$ which is of infinite order and hyperbolic with respect to $\bbk$. 
We denote by $H$ a subgroup $E(h)$ of $G$ given by Lemma \ref{hyp-hypemb}. 
We put $\bbh=\bbk\cup\{H\}$. 
Then $\bbh$ consists of proper subgroups of $G$ and $G$ is hyperbolic relative to $\bbh$. 
Since $H$ is virtually infinite cyclic, the subgroup $\Gamma$ is not parabolic with respect to $\bbh$. 

Hence it follows from Lemma \ref{hyp-element} that there exists an element $q$ of $\Gamma$ which is of infinite order and hyperbolic with respect to $\bbh$. 
We denote by $Q$ the infinite cyclic subgroup of $\Gamma$ generated by $q$. 
By Lemma \ref{hyp-hypemb} and Theorem \ref{sru-sq}, the subgroup $Q$ is strongly quasiconvex relative to $\bbh$ in $G$. 
Since $Q$ is torsion-free, this implies that the intersection $Q\cap H$ is trivial. 

Let $X$ be a finite generating set of $G$ and $C(Q,H)\ge 0$ a constant given by Lemma \ref{mp}. 
Since $h$ is of infinite order, there exists a positive integer $k$ such that $d_X(1,h^{kn}) \ge C(Q,H)$ for every integer $n\in\Z\setminus\{0\}$. 
We denote by $R$ the infinite cyclic subgroup of $\Gamma$ generated by $h^k$. 
Since $R$ is a finite index subgroup of $H$, it is strongly quasiconvex relative to $\bbk$ in $G$ by Theorem \ref{sru-sq}. 
Hence $R$ satisfies conditions (a), (b) and (c) in Lemma \ref{mp}. 
By Lemma \ref{mp}, the subgroup $\langle Q\cup R \rangle$ of $\Gamma$ is a free group of rank two which is strongly quasiconvex relative to $\bbk$ in $G$. 
\end{proof}

\begin{figure}
\begin{center}
\includegraphics[width=4.8cm,height=2.8cm]{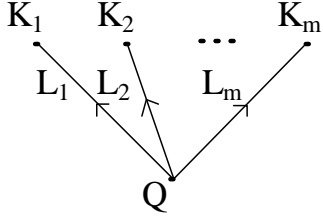}
\end{center}
\caption{The graph of groups $\mathcal{G}$}
\label{graphofgroups}
\end{figure}

For the proof of the general case, 
we need the following lemma which is obtained from a specialization of \cite[Theorem 2.44]{Osi06a} together with \cite[Proposition 2.49]{Osi06a}. 

\begin{Lem}\label{fgru}
Let $G$ be a group which is hyperbolic relative to a family $\bbk$ of proper subgroups and $X$ a finite relative generating set of $G$ with respect to $\bbk$. 
Then there exists a finite subfamily $\bbk_0=\{K_1, \ldots, K_m\}$ of $\bbk$ such that $G$ splits as the free product 
\begin{eqnarray*}
G=G_0\ast(\ast_{K\in\bbk\setminus\bbk_0}K),
\end{eqnarray*}
where $G_0$ is the subgroup of $G$ which is generated by $K_1, \ldots, K_m$ and $X$. 
Moreover there exist a finitely generated subgroup $Q$ of $G_0$ and a family $\bbl=\{L_1, \ldots, L_m\}$ of subgroups of $Q$ satisfying the following: 
\begin{enumerate}
\setlength{\itemsep}{0mm}
\item[(i)] the finite relative generating set $X$ is contained in $Q$ and for every $i \in\{1, \ldots, m\}$, the subgroup $L_i$ is contained in $K_i$; 
\item[(ii)] the group $G_0$ is isomorphic to the fundamental group of the graph of groups $\mathcal{G}$ drawn in Figure \ref{graphofgroups};
\item[(iii)] the subgroup $Q$ is hyperbolic relative to $\bbl$, the set $X$ is a relative generating set of $Q$ with respect to $\bbl$ and the natural map $(Q,d_{X\cup\call})\to (G,d_{X\cup\calk})$ is an isometric embedding, where we put $\call=\bigcup_{L\in\bbl} L\setminus\{1\}$. 
\end{enumerate}
\end{Lem}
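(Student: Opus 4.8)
The plan is to obtain Lemma \ref{fgru} by specializing the two structural results of Osin cited in the hypothesis, proceeding in three steps; recall first that relative hyperbolicity in Osin's sense forces $G$ to be finitely presented, hence finitely generated, relative to $\bbk$, so that a finite relative generating set $X$ exists and the statement makes sense even when $\bbk$ is infinite or its members are infinitely generated --- these being exactly the two failures of finite generation that the decomposition must absorb. Step one splits off the superfluous peripheral factors: I would apply \cite[Theorem 2.44]{Osi06a} to $G$ with $X$, obtaining a finite subfamily $\bbk_0=\{K_1, \ldots, K_m\}$ of $\bbk$, the free product decomposition $G=G_0\ast(\ast_{K\in\bbk\setminus\bbk_0}K)$ with $G_0=\langle K_1, \ldots, K_m, X\rangle$, and the fact that $G_0$ is hyperbolic relative to $\bbk_0$ with $X$ a finite relative generating set. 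Since $G_0$ is a free factor of this peripheral free product, the inclusion $G_0\hookrightarrow G$ induces an isometric embedding $\Gamma(G_0,X\cup\calk_0)\hookrightarrow\Gamma(G,X\cup\calk)$: expressing an element of $G_0$ in free-product normal form shows that a relative geodesic between two elements of $G_0$ gains nothing by leaving $G_0$.

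Step two extracts the finitely generated core: I would apply \cite[Proposition 2.49]{Osi06a} to the relatively hyperbolic group $G_0$ with peripheral family $\bbk_0$ and finite relative generating set $X$. Since the $K_i$ may be infinitely generated, this result passes to the ``relatively finitely generated core'': it produces finitely generated subgroups $L_i\leq K_i$ and the finitely generated subgroup $Q:=\langle X, L_1, \ldots, L_m\rangle$ of $G_0$ such that $G_0$ is the fundamental group of the star-shaped graph of groups $\mathcal{G}$ of Figure \ref{graphofgroups} --- central vertex group $Q$, outer vertex groups $K_1, \ldots, K_m$, and edge group $L_i$ on the edge joining $Q$ to $K_i$ --- such that $Q$ is hyperbolic relative to $\bbl=\{L_1, \ldots, L_m\}$ with $X$ a relative generating set, and such that the inclusion $Q\hookrightarrow G_0$ induces an isometric embedding $\Gamma(Q,X\cup\call)\hookrightarrow\Gamma(G_0,X\cup\calk_0)$.

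Step three assembles the conclusion. Properties (i) and (ii) are immediate from Step two: $X\subseteq Q$ and $L_i\subseteq K_i$ by construction, and the isomorphism of $G_0$ with $\pi_1(\mathcal{G})$ is exactly the output of \cite[Proposition 2.49]{Osi06a}. For (iii), the relative hyperbolicity of $Q$ with respect to $\bbl$ and the fact that $X$ relatively generates $Q$ come from Step two; the required isometric embedding $(Q,d_{X\cup\call})\to(G,d_{X\cup\calk})$ is the composition of the isometric embedding of Step two with that of Step one, each restricting to the group inclusions $Q\leq G_0\leq G$ on vertex sets.

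The step I expect to be the main obstacle is justifying that both embeddings are genuinely \emph{isometric} rather than merely quasi-isometric: for the free factor $G_0$ this relies on the explicit free-product normal form and uses that each $K\in\bbk\setminus\bbk_0$ is literally a free factor, while for the core $Q$ one must extract from Osin's construction that the edge groups $L_i$ are taken large enough that no relative geodesic in $\Gamma(G_0,X\cup\calk_0)$ joining elements of $Q$ needs to detour through a $K_i$. A secondary, bookkeeping difficulty is that \cite[Theorem 2.44]{Osi06a} may determine the $K_i$ only up to conjugacy whereas \cite[Proposition 2.49]{Osi06a} requires a specific conjugate, so one should fix conjugacy representatives consistently at the outset so that $L_i\subseteq K_i$ and $G_0=\langle K_1, \ldots, K_m, X\rangle$ hold literally.
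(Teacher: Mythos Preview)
Your proposal is correct and follows exactly the route the paper indicates: the paper does not give a detailed proof of this lemma but simply states that it is ``obtained from a specialization of \cite[Theorem 2.44]{Osi06a} together with \cite[Proposition 2.49]{Osi06a}'', which is precisely your Step one and Step two. Your Step three and the caveats you raise about isometric embeddings and conjugacy bookkeeping are reasonable elaborations of what the paper leaves implicit.
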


\begin{Lem}\label{fgru'}
In the setting of Lemma \ref{fgru}, we have the following:
\begin{enumerate}
\setlength{\itemsep}{0mm}
\item[(1)] if no elements of $\bbk$ contain $X$, then $\bbl$ consists of proper subgroups of $Q$. 
\item[(2)] if a subgroup of $Q$ is strongly quasiconvex relative to $\bbl$ in $Q$, then it is strongly undistorted relative to $\bbk$ in $G$;
\item[(3)] if a subgroup of $Q$ is hyperbolically embedded into $Q$ relative to $\bbl$, then it is hyperbolically embedded into $G$ relative to $\bbk$; 
\end{enumerate}
\end{Lem}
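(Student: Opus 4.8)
The plan is to establish the three items in the order stated, deriving (3) from (2). Throughout I keep in mind the structure supplied by Lemma \ref{fgru}: the graph of groups $\mathcal{G}$ is a star with central vertex group $Q$ and outer vertex groups $K_1,\dots,K_m$, the edge joining the centre to $K_i$ carrying the edge group $L_i$ with $L_i\subseteq Q$ and $L_i\subseteq K_i$; moreover $G=G_0\ast(\ast_{K\in\bbk\setminus\bbk_0}K)$, and by (iii) the natural map $(Q,d_{X\cup\call})\to(G,d_{X\cup\calk})$ is an isometric embedding. Item (1) is then immediate: if $L_i=Q$ for some $i$, then by (i) we would have $X\subseteq Q=L_i\subseteq K_i$, so $K_i\in\bbk$ would contain $X$, contrary to the hypothesis of (1); hence every $L_i$ is a proper subgroup of $Q$.

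For (2), let $P\le Q$ be strongly quasiconvex relative to $\bbl$ in $Q$. Since $Q$ is finitely generated and $\bbl$ is finite, Theorem \ref{sru-sq} shows that $P$ is strongly undistorted relative to $\bbl$ in $Q$; in particular $P$ is generated by a finite set $Y$ and the natural map $(P,d_Y)\to(Q,d_{X\cup\call})$ is a quasi-isometric embedding. (Here I use that any two finite relative generating sets of $Q$ with respect to $\bbl$ yield bi-Lipschitz equivalent word metrics on $Q$, so that the set $X$ of Lemma \ref{fgru} may be used; a quasi-isometric embedding is unaffected by post-composing with a bi-Lipschitz map.) Composing with the isometric embedding $(Q,d_{X\cup\call})\to(G,d_{X\cup\calk})$ of (iii) shows that $(P,d_Y)\to(G,d_{X\cup\calk})$ is a quasi-isometric embedding, and since $X$ is a finite relative generating set of $G$ with respect to $\bbk$, this is precisely the assertion that $P$ is strongly undistorted relative to $\bbk$ in $G$.

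For (3), let $P\le Q$ be hyperbolically embedded into $Q$ relative to $\bbl$. By Theorem \ref{hypemb-sruam}, $P$ is strongly undistorted relative to $\bbl$ and almost malnormal in $Q$; by Theorem \ref{sru-sq} the former is equivalent to $P$ being strongly quasiconvex relative to $\bbl$ in $Q$, so part (2) gives that $P$ is strongly undistorted relative to $\bbk$ in $G$. By Theorem \ref{hypemb-sruam} it therefore remains to prove that $P$ is almost malnormal in $G$. I would fix $g\in G\setminus P$ and bound $P\cap gPg^{-1}$, which lies in $Q\cap gQg^{-1}$, by distinguishing three cases. If $g\in Q$, then $g\in Q\setminus P$ and almost malnormality of $P$ in $Q$ gives that $P\cap gPg^{-1}$ is finite. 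If $g\in G_0\setminus Q$, I would pass to the Bass--Serre tree of $\mathcal{G}$: there $Q$ is the stabiliser of the central vertex $v_0$, and the stabilisers of the edges incident to $v_0$ are exactly the $Q$-conjugates of the $L_i$; since $gv_0\ne v_0$, the subgroup $Q\cap gQg^{-1}$ fixes the first edge of the geodesic $[v_0,gv_0]$ and hence lies in some $qL_iq^{-1}$ with $q\in Q$, so $P\cap gPg^{-1}\subseteq P\cap qL_iq^{-1}$, which is finite because $P$ is strongly quasiconvex relative to $\bbl$ in $Q$. Finally, if $g\notin G_0$, then $G_0$ is a free factor of $G$, hence malnormal in $G$ (the edge stabilisers of the Bass--Serre tree of the free splitting are trivial), so $P\cap gPg^{-1}\subseteq G_0\cap gG_0g^{-1}=\{1\}$. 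Thus $P$ is almost malnormal in $G$, and Theorem \ref{hypemb-sruam} completes the argument.

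The step I expect to demand the most care is the middle case of (3): reading off from Figure \ref{graphofgroups} that $\mathcal{G}$ is indeed a star centred at $Q$, so that the edge stabilisers of its Bass--Serre tree adjacent to the central vertex are precisely the $Q$-conjugates of the members of $\bbl$. Once that is secured, the two malnormality facts invoked are routine Bass--Serre theory, and items (1) and (2) are short consequences of the results already quoted.
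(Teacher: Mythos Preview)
Your proof is correct and follows essentially the same route as the paper's: items (1) and (2) are deduced from condition~(i) and from Theorem~\ref{sru-sq} combined with condition~(iii), and item (3) is obtained via Theorem~\ref{hypemb-sruam} by checking almost malnormality through the same three-case analysis ($g\in Q$, $g\in G_0\setminus Q$, $g\notin G_0$) using the Bass--Serre tree of $\mathcal{G}$ and the free-factor property of $G_0$. One small difference worth noting: in the case $g\in G_0\setminus Q$, the paper concludes that $Q\cap gQg^{-1}$ is parabolic with respect to $\bbk$ in $G$ and then appeals to strong quasiconvexity of $V$ relative to $\bbk$ in $G$, whereas you stay inside $Q$ and use that $P$ is strongly quasiconvex relative to $\bbl$ in $Q$; your version is arguably cleaner here, since Definition~\ref{def-sq} is stated only for finitely generated ambient groups and $G$ is not assumed finitely generated in this lemma.
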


\begin{proof}
(1) This follows from condition (i) in Lemma \ref{fgru}. 

(2) This follows from Theorem \ref{sru-sq} and condition (iii) in Lemma \ref{fgru}. 

(3) Suppose that a subgroup $V$ of $Q$ is hyperbolically embedded into $Q$ relative to $\bbl$. 
Then $V$ is strongly quasiconvex relative to $\bbl$ in $Q$ by Theorems \ref{hypemb-sruam} and \ref{sru-sq}. 
By assertion (2), the subgroup $V$ is strongly undistorted relative to $\bbk$ in $G$. 

We claim that $V$ is almost malnormal in $G$. 
Indeed, since $V$ is hyperbolically embedded into $Q$ relative to $\bbl$, it is almost malnormal in $Q$. 
Hence it suffices to show that if $g$ is an element of $G\setminus Q$, then the intersection $V \cap gVg^{-1}$ is finite. 
First suppose that $g$ belongs to $G \setminus G_0$. 
Since $G_0$ is a free factor of $G$, the intersection $G_0 \cap gG_0g^{-1}$ is trivial. 
Hence the intersection $V \cap gVg^{-1}$ is also trivial. 
Next suppose that $g$ belongs to $G_0\setminus Q$. 
We denote by $T$ the Bass-Serre covering tree of the graph of groups $\mathcal{G}$. 
Then the group $Q$ is the stabilizer group of a vertex $v$ of $T$ and we have $gv \neq v$. 
Since the intersection $Q \cap gQg^{-1}$ fixes both $v$ and $gv$, it fixes an edge of $T$. 
Hence $Q \cap gQg^{-1}$ is parabolic with respect to $\bbl$. 
Since every element of $\bbl$ is contained in a element of $\bbk$, the intersection $Q \cap gQg^{-1}$ is parabolic with respect to $\bbk$. 
Since the subgroup $V$ is strongly quasiconvex relative to $\bbk$ in $G$, the intersection $V \cap (Q \cap gQg^{-1})$ is finite. 
Hence the intersection $V \cap gVg^{-1}$ is also finite. 
\end{proof}

\begin{proof}[Proof of Proposition \ref{sq'}]
Since $\Gamma$ contains an element of infinite order, it follows from Lemma \ref{hyp-element} that there exists an element $h$ of $\Gamma$ which is of infinite order and hyperbolic with respect to $\bbk$. 
Let $E(h)$ be a subgroup of $G$ given by Lemma \ref{hyp-hypemb}. 
Since $\Gamma$ is not virtually cyclic, we can take an element $\gamma$ of $\Gamma\setminus E(h)$. 
By Lemma \ref{hyp-hypemb}, every subgroup of $G$ that contains $\{h,\gamma\}$ is not virtually cyclic. 
We take a finite relative generating set $X$ of $G$ with respect to $\bbk$ which contains $\{h,\gamma\}$.  
Note that since $h$ is hyperbolic with respect to $\bbk$, no elements of $\bbk$ contain $X$. 

Let $Q$ and $\bbl$ be given by Lemma \ref{fgru}. 
By Lemma \ref{fgru'} (1), the family $\bbl$ consists of proper subgroups of $Q$. Since $\Gamma\cap Q$ contains $\{h,\gamma\}$ and each element of $\bbl$ is contained in some element of $\bbk$, the subgroup $\Gamma\cap Q$ is neither virtually cyclic nor parabolic with respect to $\bbl$. 

Since $Q$ is finitely generated, it follows from Lemma \ref{sq'fg} that $\Gamma\cap Q$ contains a free subgroup $F$ of rank two which is strongly quasiconvex relative to $\bbl$ in $Q$. 
By Lemma \ref{fgru'} (2), the subgroup $F$ is strongly undistorted relative to $\bbk$ in $G$. 
\end{proof}

\begin{Rem}\label{alt}
We give an alternative proof of Corollary \ref{sq}. 

Let $F'$ be a free group of rank two with a free basis $Y'$ and $X$ a finite relative generating set of $G$ with respect to $\bbk$. 
By \cite[Theorem 1.1]{A-M-O07}, there exists a quotient group $G'$ of $G$ and an embedding $\iota \colon F' \to G'$ such that $G'$ is hyperbolic relative to $\{\psi(K)\}_{K\in\bbk} \cup \{\iota(F')\}$, where $\psi \colon G \to G'$ denotes the natural projection. 
Since $\iota(F')$ is a hyperbolic group, it follows from \cite[Theorem 2.40]{Osi06a} that $G'$ is hyperbolic relative to $\{\psi(K)\}_{K\in\bbk}$. 
Hence $\iota(F')$ is hyperbolically embedded into $G'$ relative to $\{\psi(K)\}_{K\in\bbk}$. 
By Theorem \ref{hypemb-sruam}, the natural map $(F',d_{Y'}) \to (G',d_{\psi(X)\cup\calk'})$ is a quasi-isometric embedding, where $\calk'$ denotes $\bigcup_{K\in\bbk} \psi(K) \setminus\{1\}$. 
We take a subset $Y$ of $G$ such that $Y$ consists of two elements and $\psi(Y)$ is equal to $\iota(Y')$. 
We denote by $F$ the subgroup of $G$ which is generated by $Y$. 
Then $F$ is a free group of rank two. 
We can confirm that the natural map $(F,d_Y) \to (G,d_{X\cup\calk})$ is a quasi-isometric embedding. 
This finishes the proof of Corollary \ref{sq}. 
\end{Rem}

\section{Almost malnormal subgroups of virtually free groups}
\label{sect-amvf}
In this section, we show the following (compare with \cite[Theorem 5.16]{Kap99} for the case of non-abelian free groups of finite rank), 
which is necessary for the proof of Theorem \ref{hypemb}. 

\begin{Thm}\label{kap1}
Let $M$ be a finitely generated and virtually non-abelian free group and let $\{M_l ~|~ l \in \{1, \ldots, n\}\}$ be a finite family of finitely generated subgroups of $M$ of infinite index. 
Then there exists a proper subgroup $V$ of $M$ satisfying the following: 
\begin{enumerate}
\setlength{\itemsep}{0mm}
\item[(i)] the subgroup $V$ is finitely generated, virtually non-abelian free, and almost malnormal in $M$; 
\item[(ii)] for every $l \in \{1, \ldots, n\}$ and every element $m$ of $M$, the intersection $mVm^{-1} \cap M_l$ is finite. 
\end{enumerate}
\end{Thm}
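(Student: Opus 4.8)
The plan is to reduce the statement to Kapovich's result for free groups of finite rank (\cite[Theorem 5.16]{Kap99}) by passing to a free normal subgroup of finite index and then applying a normalizer construction.

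\textbf{Reduction.} Since $M$ is finitely generated and virtually non-abelian free, choose a normal subgroup $F$ of $M$ of finite index that is free of rank at least two, and let $\phi\colon M\to\Aut(F)$ be the conjugation action, so that $\phi(F)$ is the group of inner automorphisms of $F$. For each $l$ put $N_l=M_l\cap F$; then $N_l$ is finitely generated, and since $M_l$ has infinite index in $M$ while $[M:F]<\infty$, the subgroup $N_l$ has infinite index in $F$. I claim it suffices to construct a subgroup $V_0$ of $F$ which (a) is finitely generated, free of rank two and of infinite index in $F$; (b) is malnormal in $F$; (c) satisfies $\alpha(V_0)=V_0$ or $\alpha(V_0)\cap V_0=\{1\}$ for every $\alpha\in\phi(M)$; and (d) satisfies $\alpha(V_0)\cap N_l=\{1\}$ for every $\alpha\in\phi(M)$ and every $l$. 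Indeed, given such a $V_0$, set $V=N_M(V_0)$. Malnormality of $V_0$ in $F$ forces $V\cap F=N_F(V_0)=V_0$, so $V/V_0$ embeds in the finite group $M/F$; hence $V$ is finitely generated and contains the non-abelian free group $V_0$ as a subgroup of finite index, so $V$ is virtually non-abelian free. A nontrivial finitely generated subgroup of infinite index in a free group is never normal, so $V_0$ is not normal in $M$ and $V$ is a proper subgroup of $M$. If $m\in M\setminus V$ then $mV_0m^{-1}\neq V_0$, so (c) gives $mV_0m^{-1}\cap V_0=\{1\}$; since $V\cap F=V_0$ and $mVm^{-1}\cap F=mV_0m^{-1}$, the intersection $V\cap mVm^{-1}$ meets $F$ trivially and therefore injects into $M/F$, so it is finite: $V$ is almost malnormal in $M$. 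Finally, for any $m\in M$ and any $l$, condition (d) gives $mVm^{-1}\cap M_l\cap F=\phi(m)(V_0)\cap N_l=\{1\}$, so $mVm^{-1}\cap M_l$ injects into $M/F$ and is finite. Thus $V$ has all the required properties.

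\textbf{Construction of $V_0$.} Let $\beta_1=\mathrm{id},\dots,\beta_d\in\phi(M)$ represent the (finitely many) cosets of the inner automorphism group in $\phi(M)$, and put $V_0^{(j)}=\beta_j(V_0)$. Since the $\phi(M)$-orbit of $V_0$ is the union of the $F$-conjugacy classes of $V_0^{(1)},\dots,V_0^{(d)}$, conditions (b), (c), (d) amount to: $V_0$ is malnormal in $F$, the finite family $V_0^{(1)},\dots,V_0^{(d)}$ is a malnormal family of subgroups of $F$ with pairwise non-conjugate members, and every $F$-conjugate of every $V_0^{(j)}$ meets every $F$-conjugate of every $N_l$ trivially. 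This is an equivariant refinement of \cite[Theorem 5.16]{Kap99}, and I would prove it by the same method: take $V_0=\langle w_1,w_2\rangle$ for sufficiently long and sufficiently generic $w_1,w_2\in F$, and control the relevant fiber products of Stallings subgroup graphs. The infinite index of each $N_l$ guarantees that its Stallings graph is a proper folded graph, which leaves enough freedom to choose $w_1,w_2$ so that all the fiber products that must vanish do vanish, while a genericity (small-cancellation type) argument rules out every unwanted coincidence among $V_0$, the subgroups $N_l$, and the finitely many images $\beta_j(w_1),\beta_j(w_2)$.

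\textbf{Main difficulty.} The reduction is routine; the content lies in the construction of $V_0$, and the only genuinely new point there, compared with Kapovich's argument, is the equivariance: one must keep all the images of $V_0$ under the fixed finite collection $\beta_1,\dots,\beta_d$ (equivalently, under the finite subgroup $\phi(M)/\phi(F)$ of $\Out(F)$) simultaneously malnormal, pairwise unrelated, and disjoint from all the $N_l$. Checking that a generic choice of $w_1,w_2$ achieves this is the heart of the matter. An alternative, avoiding the passage to $F$ altogether, is to develop Stallings-type foldings directly for a decomposition of $M$ as a finite graph of finite groups and to run Kapovich's ``long loop'' construction there, producing $V$ at once with prescribed finite vertex groups; this is conceptually parallel but requires more machinery to set up.
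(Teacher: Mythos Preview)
Your reduction step is correct and essentially identical to the paper's: pass to a free normal subgroup $F\trianglelefteq M$ of finite index, set $N_l=M_l\cap F$, construct a suitable rank-two $V_0\le F$, and take $V$ to be its $M$-normalizer (the paper writes $V_M(H)$, but under condition (c) this coincides with $N_M(V_0)$). The verification that $V$ then satisfies (i) and (ii) is also the same.

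The genuine gap is in the construction of $V_0$. You correctly isolate the target --- the equivariant strengthening of \cite[Theorem~5.16]{Kap99} under the finite group $\phi(M)/\phi(F)\le\Out(F)$ --- but you do not prove it; you only assert that ``a generic choice of $w_1,w_2$ achieves this'' and that this ``is the heart of the matter''. The difficulty is real: for a non-inner $\beta_j$, the Stallings graph of $\beta_j(\langle w_1,w_2\rangle)$ bears no simple relation to that of $\langle w_1,w_2\rangle$, so the fibre-product vanishing you want does not follow from the length or small-cancellation properties of $w_1,w_2$ alone. Your proposal does not indicate how to overcome this, and Kapovich's original argument does not supply it either.

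The paper's route avoids this obstacle entirely. It proves the equivariant statement (Lemma~\ref{kap2}) by an \emph{iterative} procedure rather than a one-shot generic choice: start with any $H'$ given by \cite[Theorem~5.16]{Kap99} (this already handles your conditions (a), (b), (d)), then study the intersections $H'\cap f a_i(H')f^{-1}$. Since $a_i(H')$ is quasiconvex and malnormal in $F$, the group $F$ is hyperbolic relative to $\{a_i(H')\}$, so by \cite[Theorem~9.1]{Hru10} these intersections fall into finitely many $H'$-conjugacy classes $K_{i,j}$, each finitely generated and of infinite index in $H'$. Now apply \cite[Theorem~5.16]{Kap99} again inside $H'$ with the $K_{i,j}$ playing the role of the forbidden subgroups; the resulting $H''\le H'$ kills all nontrivial-but-proper intersections with $a_i$-images for those $i$, strictly enlarging the set $I_1$. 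After finitely many such steps one reaches $H$ with $I_3(H)=\emptyset$, which is exactly your condition (c). The point is that each step only needs the \emph{non-equivariant} Kapovich theorem, together with the finiteness of intersection classes coming from relative hyperbolicity; no direct control of Stallings graphs under outer automorphisms is required.
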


For the proof of Theorem \ref{kap1}, we prepare two lemmas. 

\begin{Lem}\label{aminfind}
If a proper subgroup $H$ of an infinite group $G$ is almost malnormal in $G$, then $H$ is an infinite index subgroup of $G$. 
\end{Lem}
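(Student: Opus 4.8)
The plan is to argue by contradiction: suppose $H$ is an almost malnormal proper subgroup of the infinite group $G$ that has finite index, and derive an absurdity. The finite index hypothesis gives a coset decomposition $G = \bigsqcup_{i=1}^{N} g_i H$ with $N = [G:H] < \infty$, and since $H$ is proper we may take $N \ge 2$, so there is some $g_j \notin H$. The key observation is that for such $g_j$, almost malnormality forces $H \cap g_j H g_j^{-1}$ to be finite, yet finite index should make this intersection large.

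First I would make the ``large intersection'' precise. For any $g \in G$, the conjugate $gHg^{-1}$ also has index $N$ in $G$ (conjugation is an automorphism). Then I would use the standard fact that the intersection of two finite-index subgroups is again finite index, with the bound
\begin{equation*}
[G : H \cap gHg^{-1}] \le [G:H]\cdot[G:gHg^{-1}] = N^2.
\end{equation*}
Hence $H \cap gHg^{-1}$ has index at most $N^2$ in $G$. Since $G$ is infinite, a finite-index subgroup is necessarily infinite, so $H \cap gHg^{-1}$ is infinite for every $g \in G$. Applying this to $g = g_j \in G \setminus H$ directly contradicts almost malnormality, which demands that $H \cap g_j H g_j^{-1}$ be finite. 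This contradiction completes the argument.

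I do not expect any serious obstacle here; the proof is short and rests entirely on elementary index arithmetic together with the definition of almost malnormality from Definition \ref{def-am}. The only point requiring a little care is confirming that $N \ge 2$, i.e.\ producing an element outside $H$: this is exactly where the hypothesis that $H$ is \emph{proper} is used, guaranteeing the existence of $g_j \in G \setminus H$ so that almost malnormality can be invoked. One should also note that the conclusion is sharp in spirit: if $H$ had finite index it would necessarily be infinite (being finite index in an infinite group), which is precisely what clashes with the finiteness conclusion of almost malnormality.
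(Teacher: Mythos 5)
Your proof is correct and follows essentially the same route as the paper: assume $H$ has finite index, note that $H\cap gHg^{-1}$ is then a finite-index (hence infinite) subgroup of the infinite group $G$, and take $g\in G\setminus H$ (which exists since $H$ is proper) to contradict almost malnormality. The only difference is that you spell out the index bound and the need for $[G:H]\ge 2$ explicitly, which the paper leaves implicit.
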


\begin{proof}
Assume that $H$ is a finite index subgroup of $G$. 
Then for every element $g$ of $G$, the intersection $H \cap gHg^{-1}$ is a finite index subgroup of $G$ and hence it is infinite. 
This contradicts the assumption that $H$ is almost malnormal in $G$. 
\end{proof}

\begin{Lem}\label{kap2}
Let $F$ be a non-abelian free group of finite rank and let $\{H_l ~|~ l \in \{1, \ldots, n\}\}$ be a finite family of finitely generated subgroups of $F$ of infinite index. 
Let $U$ be a finite subgroup of $\Out(F)$. 
We denote by $\pi \colon \Aut(F) \to \Out(F)$ the quotient map and put $A=\pi^{-1}(U)$. Then there exists a proper subgroup $H$ of $F$ satisfying the following: 
\begin{enumerate}
\setlength{\itemsep}{0mm}
\item[(i)] the subgroup $H$ is a free subgroup of rank two and is malnormal in $F$;
\item[(ii)] for every $l \in \{1, \ldots, n\}$ and every element $a$ of $A$, the intersection $H_l \cap a(H)$ is trivial; 
\item[(iii)] for every element $a$ of $A$, either $a(H)$ is equal to $H$ or the intersection $H \cap a(H)$ is trivial.
\end{enumerate}
\end{Lem}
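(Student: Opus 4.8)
\textbf{Proof proposal for Lemma \ref{kap2}.}
The plan is to find a single element $g \in F$ that is ``generic'' with respect to all the data, and then take $H = \langle g, wgw^{-1}\rangle$ for a suitable auxiliary element $w$, or more directly to take $H$ to be generated by two high powers of independent loxodromic elements that avoid every relevant coset. First I would reduce everything to statements about how conjugates and automorphic images of a cyclic (or rank-two) subgroup meet the finitely many ``bad'' subgroups. The key point is that $A = \pi^{-1}(U)$ is finitely generated (since $U$ is finite and $\Aut(F)$ acts, $A$ is a finite extension of $\Inn(F) \cong F$), so the orbit $A \cdot H$ of any candidate $H$ is governed by finitely many automorphisms modulo the inner part; thus conditions (ii) and (iii) become a finite list of ``the intersection is trivial'' requirements, one for each pair (coset representative of $U$ in $A/\Inn(F)$) $\times$ (either an $H_l$ or $H$ itself), after absorbing the inner automorphisms into the already-quantified $m \in F$ or into a conjugacy condition.

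Next I would invoke the well-known fact (e.g. from \cite{Kap99}, in the style of its Theorem 5.16, or from malnormality results for free groups) that in a non-abelian free group of finite rank one can choose an element $h$ of infinite order whose maximal cyclic subgroup $E(h) = \langle h \rangle$ is malnormal and moreover such that no conjugate of $\langle h \rangle$ meets any of the finitely many prescribed infinite-index finitely generated subgroups $H_1, \ldots, H_n$ nor any of their images under the finitely many automorphisms representing $A/\Inn(F)$; this uses that a finitely generated infinite-index subgroup of $F$ is ``small'' (it is a free factor of a finite-index subgroup, or more simply: the set of elements lying in some conjugate of a fixed infinite-index f.g. subgroup is a negligible/``thin'' set), while the malnormal cyclic elements are abundant. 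Then, to upgrade from a cyclic $H$ to a rank-two free $H$ as demanded in (i), I would take two such elements $h_1, h_2$ with $\langle h_1 \rangle$ and $\langle h_2 \rangle$ both malnormal, not conjugate, and ``independent'' (generating a free group of rank two on which all the avoidance conditions still hold); passing to sufficiently high powers $h_1^{N}, h_2^{N}$ and applying a ping-pong / small-cancellation argument ensures $H = \langle h_1^N, h_2^N \rangle$ is free of rank two and malnormal in $F$, and since raising to powers only shrinks $H$ and its conjugates/images, conditions (ii) and (iii) are preserved. Finally I would check properness of $H$: since $H$ is malnormal and $F$ is infinite, Lemma \ref{aminfind} forces $H$ to have infinite index, hence $H \neq F$.

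The main obstacle I anticipate is carefully arranging that the \emph{same} $H$ simultaneously satisfies the malnormality in (i), the trivial-intersection-with-$H_l$ in (ii), and the dichotomy in (iii) under \emph{all} of $A$, rather than just under $\Inn(F)$. Getting (iii) is delicate because $a(H)$ can a priori be a ``rotated'' copy of $H$ that partially overlaps it; the resolution is to ensure that whenever $a(H) \cap H \neq 1$, the nontrivial common element forces (via malnormality of the cyclic subgroups generating $H$ and their non-conjugacy) that $a$ permutes the conjugacy classes of the generators of $H$ in a way that makes $a(H) = H$ outright. This is exactly the kind of bookkeeping done in \cite[Theorem 5.16]{Kap99}, and I expect the proof to reduce to choosing $h_1, h_2$ (and the exponent $N$) to lie outside a finite union of ``thin'' exceptional sets — one set ruling out each possible bad coincidence — which is possible precisely because each exceptional set has infinite index / is not all of $F$, while the supply of good malnormal independent pairs is generic.
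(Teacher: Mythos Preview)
Your reduction of (ii) to finitely many avoidance conditions via coset representatives $a_1,\ldots,a_m$ of $A/\mathrm{Inn}(F)$ is exactly right, and matches the paper's first step: applying \cite[Theorem~5.16]{Kap99} to the family $\{a_i^{-1}(H_l)\}$ gives a rank-two malnormal $H'$ satisfying (i) and (ii). The gap is in your treatment of (iii). Condition (iii) is self-referential---the subgroup to be avoided, $a(H)$, depends on the $H$ you are constructing---so it cannot be cast as ``avoid a fixed finite list of infinite-index subgroups'' and fed once into Kapovich's theorem. Your proposed mechanism, that a nontrivial element of $H\cap a(H)$ must (by malnormality of the cyclic generators) force $a$ to permute the conjugacy classes of $h_1^N,h_2^N$, does not hold: a common element can be an arbitrary word in the generators, and malnormality of $\langle h_j\rangle$ in $F$ says nothing about how an arbitrary element of $H$ sits inside $a(H)$. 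So the one-shot ping-pong construction does not deliver (iii) as stated.

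The paper resolves the self-reference by an iterative descent. Starting from any $H'$ satisfying (i) and (ii), it partitions $\{1,\ldots,m\}$ according to whether the family $\overline{\bbk}_i=\{H'\cap f a_i(H')f^{-1}:f\in F\}\setminus\{1\}$ is empty ($I_1$), equals $\{H'\}$ ($I_2$), or is something else ($I_3$). The goal is $I_3=\emptyset$. For $i\in I_3(H')$, one uses that $a_i(H')$ is malnormal and quasiconvex, so $F$ is hyperbolic relative to $\{a_i(H')\}$; relative quasiconvexity of $H'$ then gives, via \cite[Theorem~9.1]{Hru10}, that $\overline{\bbk}_i$ has only finitely many $H'$-conjugacy classes $K_{i,1},\ldots,K_{i,n_i}$, each finitely generated and of infinite index in $H'$. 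Now apply \cite[Theorem~5.16]{Kap99} \emph{inside $H'$} to find $H''\subset H'$ avoiding all the $K_{i,j}$. One checks $I_1(H')\cup I_3(H')\subset I_1(H'')$, so if $I_3(H'')\neq\emptyset$ the union $I_1\cup I_3$ has strictly grown; since $\{1,\ldots,m\}$ is finite, the process terminates with $I_3=\emptyset$. This iteration is the idea missing from your sketch.
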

\begin{proof}
We put $U=\{u_i ~|~ i \in \{1, \ldots, m\}\}$ and choose an element $a_i$ of $\pi^{-1}(u_i)$ for each $i \in \{1, \ldots, m\}$. 
We denote by $\calm$ the collection of all proper subgroups $H'$ of $F$ satisfying the following:
\begin{itemize}
\setlength{\itemsep}{0mm}
\item the subgroup $H'$ is a free subgroup of rank two and is malnormal in $F$;
\item for every $i \in \{1, \ldots, m\}$, every $l \in \{1, \ldots, n\}$ and every element $f$ of $F$, the intersection $a_i^{-1}(H_l) \cap fH'f^{-1}$ is trivial. 
\end{itemize}
By \cite[Theorem 5.16]{Kap99}, the collection $\calm$ is not empty. 
We remark that every element of $\calm$ satisfies conditions (i) and (ii) in Lemma \ref{kap2}. 

For each $i \in \{1, \ldots, m\}$ and each element $H'$ of $\calm$, we put as follows:
\begin{eqnarray*}
\overline{\bbk}_i &=& \{K \subset H' ~|~ K \neq \{1\} \text{ and } K=H'\cap fa_i(H')f^{-1} \text{ for some } f \in F\}; \\
I_1(H') &=& \{i \in \{1, \ldots, m\} ~|~ \overline{\bbk}_i=\emptyset\}; \\
I_2(H') &=& \{i \in \{1, \ldots, m\} ~|~ \overline{\bbk}_i=\{H'\}\};\\
I_3(H') &=& \{i \in \{1, \ldots, m\} ~|~ \overline{\bbk}_i\neq\emptyset\text{ and }\overline{\bbk}_i\neq\{H'\}\}.
\end{eqnarray*}
Since every finitely generated subgroup of $F$ is quasiconvex in $F$ (see \cite[Section 2]{Sho91}), the subgroup $a_i(H')$ as well as $H'$ is quasiconvex and malnormal in $F$. 
By \cite[Theorem 7.11]{Bow12}, the group $F$ is hyperbolic relative to $\{a_i(H')\}$. 
Since $H'$ is quasiconvex in $F$, it follows from \cite[Theorem 1.1 (1)]{MP12} that $H'$ is quasiconvex relative to $\{a_i(H')\}$ in $F$.  
By \cite[Theorem 9.1]{Hru10}, the collection $\overline{\bbk}_i$ has a finite set of representatives of $H'$-conjugacy classes $\bbk_i=\{K_{i,j} ~|~ j \in \{1, \ldots, n_i\}\}$ and $H'$ is hyperbolic relative to $\bbk_i$. 
For every $j \in \{1, \ldots, n_i\}$ the subgroup $K_{i,j}$ is finitely generated and malnormal in $H'$ (see \cite[Propositions 2.29 and 2.36]{Osi06a}). 

For the proof of the lemma, it suffices to show that there exists an element $H$ of $\calm$ such that $I_3(H)$ is empty. 
Indeed if $H$ is such an element of $\calm$, then for every $a\in A$, either both $H\cap a(H)$ and $H\cap a^{-1}(H)$ are equal to $H$ or the intersection $H\cap a(H)$ is trivial. 
If the former occurs, then $a(H)$ is equal to $H$.
Hence the subgroup $H$ has the desired properties.

Let $H'$ be an element of $\calm$. 
We have only to consider the case where the set $I_3(H')$ is not empty. 
Then for every $i\in I_3(H')$ and every $j \in \{1, \ldots, n_i\}$, the group $K_{i,j}$ is a proper malnormal subgroup of $H'$ and hence it is of infinite index in $H'$ by Lemma \ref{aminfind}. 
By \cite[Theorem 5.16]{Kap99}, there exists a proper subgroup $H''$ of $H'$ satisfying the following: 
\begin{itemize}
\setlength{\itemsep}{0mm}
\item $H''$ is a free subgroup of rank two and is malnormal in $H'$;
\item for every $i \in I_3(H')$, every $j \in \{1, \ldots, n_i\}$ and every element $h'$ of $H'$, the intersection $K_{i,j} \cap h'H''h'^{-1}$ is trivial. 
\end{itemize}
Since $H'$ belongs to $\calm$, the subgroup $H''$ also belongs to $\calm$. 

We claim that if $i\in \{1, \ldots, m\}$ belongs to $I_3(H')$, 
then for every element $f$ of $F$, the intersection $H'' \cap a_i(fH''f^{-1})$ is trivial. 
Indeed, the intersection $H' \cap a_i(fH'f^{-1})$ is either trivial or conjugate to $K_{i,j}$ in $H'$ for some $j \in \{1, \ldots, n_i\}$. 
If the former occurs, the claim obviously holds. 
If the latter occurs, the intersection $H'' \cap a_i(fH''f^{-1})$ is conjugate to a subgroup of $K_{i,j} \cap h'H''h'^{-1}$ for some $j \in \{1, \ldots, n_i\}$ and some element $h'$ of $H'$. 
By the choice of $H''$, the intersection $H'' \cap a_i(fH''f^{-1})$ is trivial.

The above claim implies that $I_3(H')$ is contained in $I_1(H'')$. 
Since $H''$ is a subgroup of $H'$, the set $I_1(H')$ is also contained in $I_1(H'')$. 
Hence the union $I_1(H') \cup I_3(H')$ is a proper subset of the union $I_1(H'') \cup I_3(H'')$ if $I_3(H'')$ is not empty. 
By repeating this procedure if necessary, we can find an element $H$ of $\calm$ such that $I_3(H)$ is empty. 
\end{proof}

Now we are ready to prove Theorem \ref{kap1}. 
Given a subgroup $H$ of a group $G$, we put 
\begin{eqnarray*}
V_G(H) &=& \{ g \in G ~|~ H \cap gHg^{-1}\text{ is of finite index both in }H\text{ and }gHg^{-1}\}. 
\end{eqnarray*}

\begin{proof}[Proof of Theorem \ref{kap1}]
It follows from the assumption that $M$ has a finite index normal subgroup $F$ which is a non-abelian free group of finite rank. 
The action of $M$ on $F$ by conjugations induces a homomorphism from $M$ to $\Out(F)$. 
We denote the image of this homomorphism by $U$. 
Since $F$ is a finite index subgroup of $M$, $U$ is a finite subgroup of $\Out(F)$. 
For each $l \in \{1, \ldots, n\}$, we put $H_l=M_l \cap F$. 
Since $M_l$ is finitely generated and $F$ is a finite index subgroup of $M$, this implies that $H_l$ is also finitely generated. 
Since the subgroup $M_l$ is of infinite index in $M$ and the subgroup $F$ is of finite index in $M$, the subgroup $H_l$ is of infinite index in $F$ and of finite index in $M_l$. 

Therefore we can take a subgroup $H$ of $F$ given by Lemma \ref{kap2}. 
We put $V=V_M(H)$. 
By \cite[Theorem 1.6]{H-W09}, the group $H$ is a finite index subgroup of $V$. 
Hence $V$ is a finitely generated and virtually non-abelian free group. 
By the definition of $U$ and condition (iii) in Lemma \ref{kap2}, for every element $m$ of $M$, either $mHm^{-1}$ is equal to $H$ or the intersection $H \cap mHm^{-1}$ is trivial.  
Hence for every element $m$ of $M \setminus V$, the intersection $H \cap mHm^{-1}$ is trivial. 
Since $H$ is a finite index subgroup of $V$, this implies that $V$ is almost malnormal in $M$. 
By condition (ii) in Lemma \ref{kap2}, for every $l \in \{1, \ldots, n\}$ and every element $m$ of $M$, the intersection $H_l \cap mHm^{-1}$ is trivial and hence the intersection $M_l \cap mVm^{-1}$ is finite. 
\end{proof}

\section{Proof of Theorem \ref{hypemb}}
\label{sect-proof}
We prove Theorem \ref{hypemb}. 
As the case of Corollary \ref{sq}, it suffices to show the following in view of \cite[Corollary 4.5]{Osi06b}. 

\begin{Thm}\label{hypemb'}
Let $G$ be a group which is hyperbolic relative to a family $\bbk$ of proper subgroups and $\Gamma$ a subgroup of $G$ which is neither virtually cyclic nor parabolic with respect to $\bbk$. 
If $\Gamma$ contains an element of infinite order, then there exists a finitely generated and virtually non-abelian free subgroup $V$ of $G$ which is hyperbolically embedded into $G$ relative to $\bbk$ and contains $V\cap \Gamma$ as a finite index subgroup. 
Moreover if $G$ is torsion-free, then $\Gamma$ contains a free subgroup of rank two which is hyperbolically embedded into $G$ relative to $\bbk$. 
\end{Thm}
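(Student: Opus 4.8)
The plan is to reduce to the finitely generated case, manufacture a rank-two free subgroup of $\Gamma$ that is strongly quasiconvex relative to $\bbk$, replace it by its commensurator $M$ in $G$ (a finitely generated virtually non-abelian free subgroup which is \emph{self-commensurating} and still strongly quasiconvex relative to $\bbk$), and then apply Theorem \ref{kap1} inside $M$ to a carefully chosen family $\{M_l\}$, so that the subgroup $V$ it produces becomes almost malnormal in all of $G$.

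First I would carry out the same reduction as in the proof of Proposition \ref{sq'}: pick $h\in\Gamma$ of infinite order and hyperbolic with respect to $\bbk$ (Lemma \ref{hyp-element}) and $\gamma\in\Gamma\setminus E(h)$ (Lemma \ref{hyp-hypemb}), choose a finite relative generating set $X$ of $G$ with respect to $\bbk$ containing $\{h,\gamma\}$ (so no element of $\bbk$ contains $X$), and pass to the finitely generated subgroup $Q$ with its finite family $\bbl$ of proper peripheral subgroups supplied by Lemmas \ref{fgru} and \ref{fgru'}. Since $\Gamma\cap Q\supset\{h,\gamma\}$ is neither virtually cyclic nor parabolic with respect to $\bbl$, since $\Gamma\cap Q\subset\Gamma$, and since hyperbolic embeddedness into $Q$ relative to $\bbl$ implies hyperbolic embeddedness into $G$ relative to $\bbk$ by Lemma \ref{fgru'} (3), it suffices to treat $Q$, $\bbl$, $\Gamma\cap Q$; so I assume from now on that $G$ is finitely generated. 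By Lemma \ref{sq'fg}, $\Gamma$ then contains a free subgroup $F_0$ of rank two which is strongly quasiconvex relative to $\bbk$ in $G$. Let $M=V_G(F_0)$. As $F_0$ is relatively quasiconvex and not parabolic, the result on commensurators of relatively quasiconvex subgroups (the relatively hyperbolic analogue of \cite[Theorem 1.6]{H-W09}) shows that $M$ is a subgroup containing $F_0$ with finite index; hence $M$ is finitely generated and virtually non-abelian free, and, relative quasiconvexity and finiteness of intersections with conjugates of parabolics being inherited by finite-index overgroups, $M$ is strongly quasiconvex relative to $\bbk$ in $G$. Moreover $M$ is self-commensurating, i.e. $V_G(M)=M$, and $F_0\subset M\cap\Gamma\subset M$ forces $[M:M\cap\Gamma]<\infty$.

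Next, since $M$ is relatively quasiconvex in $G$, the finite-height results for relatively quasiconvex subgroups (in the spirit of \cite[Theorem 9.1]{Hru10}) provide a finite family $\{M_l~|~l\in\{1,\ldots,n\}\}$ of finitely generated subgroups of $M$ representing the $M$-conjugacy classes of the infinite subgroups of the form $M\cap gMg^{-1}$ with $g\in G\setminus M$; because $M$ is self-commensurating, each $M_l$ has infinite index in $M$. Applying Theorem \ref{kap1} to $M$ and this family yields a proper subgroup $V$ of $M$ which is finitely generated, virtually non-abelian free, almost malnormal in $M$, and such that $mVm^{-1}\cap M_l$ is finite for every $l$ and every $m\in M$. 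I would then check that $V$ is hyperbolically embedded into $G$ relative to $\bbk$ by Theorems \ref{hypemb-sruam} and \ref{sru-sq}: $V$ is finitely generated inside the virtually free group $M$, hence quasiconvex in $M$, hence relatively quasiconvex in $G$ by transitivity, with $V\cap gKg^{-1}\subset M\cap gKg^{-1}$ finite for $K\in\bbk$, so $V$ is strongly quasiconvex relative to $\bbk$ in $G$; and for $g\in G\setminus V$ the intersection $V\cap gVg^{-1}$ is finite, by almost malnormality of $V$ in $M$ when $g\in M$, and, when $g\notin M$, because $V\cap gVg^{-1}\subset M\cap gMg^{-1}$ is then finite or equals $mM_lm^{-1}$ for some $m\in M$, in which case $V\cap gVg^{-1}\subset m\bigl(m^{-1}Vm\cap M_l\bigr)m^{-1}$ is finite. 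Finally $V\subset M$ and $[M:M\cap\Gamma]<\infty$ give $[V:V\cap\Gamma]<\infty$; transporting back through the reduction via Lemma \ref{fgru'} (3) proves the first assertion.

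For the torsion-free case $M$ is itself free of finite rank and ``almost malnormal'' means ``malnormal''. Here I would rerun the argument inside $M$, but taking the distinguished finite-index normal free subgroup of $M$ occurring in the proof of Theorem \ref{kap1} to lie inside $\Gamma$ (for instance the normal core in $M$ of $M\cap\Gamma$), and using a strengthening of Lemma \ref{kap2} producing $H$ with the additional property $N_M(H)=H$ — in the notation of the proof of Lemma \ref{kap2} this amounts to forcing $I_2(H)=\{1\}$, arranged by also eliminating, at each stage of the iteration that empties $I_3$, the conjugates responsible for $I_2$. Then $V_M(H)=N_M(H)=H$ is free of rank two, is contained in $\Gamma$, and the verification above shows it is hyperbolically embedded into $G$ relative to $\bbk$. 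The step I expect to be the main obstacle is supplying the two relatively hyperbolic inputs that feed Theorem \ref{kap1}: that $F_0$ has finite index in its commensurator $M$, and that the self-intersections $M\cap gMg^{-1}$ ($g\notin M$) fall into finitely many $M$-conjugacy classes with finitely generated representatives; together with the torsion-free refinement of Lemma \ref{kap2}, this is where the real work lies.
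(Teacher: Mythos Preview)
Your overall architecture matches the paper's exactly: reduce to the finitely generated case via Lemmas \ref{fgru} and \ref{fgru'} (3), produce $F_0\subset\Gamma$ by Lemma \ref{sq'fg}, pass to $M=V_G(F_0)$, extract the finitely many ``bad'' intersections $M_l=M\cap g_lMg_l^{-1}$, and feed these into Theorem \ref{kap1}. There is, however, a real gap in your sentence ``because $M$ is self-commensurating, each $M_l$ has infinite index in $M$.'' The equality $V_G(M)=M$ only says that for $g\notin M$ the intersection $M\cap gMg^{-1}$ has infinite index in \emph{at least one} of $M$ and $gMg^{-1}$; it does not a priori rule out $[M:M_l]<\infty$ with $[gMg^{-1}:M_l]=\infty$. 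The paper spends a paragraph on exactly this point: assuming $[M:M_l]<\infty$, it iterates via \cite[Lemma 6.6]{Kap99} to get $\bigcap_{k=1}^{p} g_l^kMg_l^{-k}$ infinite for all $p$, then invokes the bounded-packing/height result \cite[Theorem 1.4]{H-W09} to force $g_l^{p}\in M$ for some $p$, reaching a contradiction. You should incorporate this argument. (Relatedly, the finiteness of the family $\{M_l\}$ and their finite generation come from \cite[Corollary 8.7]{H-W09} and the intersection theorem for relatively quasiconvex subgroups, not from \cite[Theorem 9.1]{Hru10}, which concerns intersections with \emph{peripheral} conjugates.)

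For the torsion-free clause, the paper takes a shorter route than your proposed strengthening of Lemma \ref{kap2}: since $G$ torsion-free makes $M$ a finitely generated free group and forces $V_M(H)=H$ for any malnormal $H\le M$, one may run the proof of Theorem \ref{kap1} with the distinguished finite-index normal free subgroup taken to be $M$ itself, i.e.\ simply apply \cite[Theorem 5.16]{Kap99} to the free group in place of Theorem \ref{kap1}; the resulting $V=H$ is then automatically free of rank two. To land inside $\Gamma$ one uses that $F_0\subset M\cap\Gamma$ has finite index in $M$; your idea of passing to the normal core of $M\cap\Gamma$ is a clean way to phrase this, but the extra ``force $I_2(H)=\{1\}$'' maneuver is unnecessary once one observes $V_M(H)=H$ in the torsion-free setting.
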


This generalizes a result due to I. Kapovich \cite[Theorem C]{Kap99} for torsion-free hyperbolic groups. 
For the proof of Theorem \ref{hypemb'}, we show the following lemma. 

\begin{Lem}\label{hypemb'fg}
Let $G$ be a group which is hyperbolic relative to a finite family $\bbk$ of proper subgroups and $\Gamma$ a subgroup of $G$ which is neither virtually cyclic nor parabolic with respect to $\bbk$. 
Suppose that $G$ is finitely generated. 
Then there exists a finitely generated and virtually non-abelian free subgroup $V$ of $G$ which is hyperbolically embedded into $G$ relative to $\bbk$ and contains $V\cap \Gamma$ as a finite index subgroup. 
Moreover if $G$ is torsion-free, then $\Gamma$ contains a free subgroup of rank two which is hyperbolically embedded into $G$ relative to $\bbk$. 
\end{Lem}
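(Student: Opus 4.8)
The plan is to reduce the problem to the finitely generated case (Lemma \ref{hypemb'fg}) by the same device used in the proof of Proposition \ref{sq'}, namely by passing to the subgroup $Q$ furnished by Lemma \ref{fgru}, and then inside the finitely generated situation to combine Proposition \ref{sq'} with the almost malnormality construction of Theorem \ref{kap1}, using Lemma \ref{mp} to amalgamate. So first I would use Lemma \ref{hyp-element} to obtain an element $h$ of $\Gamma$ which is of infinite order and hyperbolic with respect to $\bbk$, take a finite relative generating set $X$ of $G$ with respect to $\bbk$ containing $h$ (and an element $\gamma\in\Gamma\setminus E(h)$, so that no element of $\bbk$ contains $X$), and apply Lemma \ref{fgru} to get a finitely generated subgroup $Q\le G_0$ and a family $\bbl$ of proper subgroups of $Q$ (proper by Lemma \ref{fgru'}(1)) such that $Q$ is hyperbolic relative to $\bbl$ and $\Gamma\cap Q$ is neither virtually cyclic nor parabolic with respect to $\bbl$. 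By Lemma \ref{hypemb'fg} there is a finitely generated virtually non-abelian free subgroup $V$ of $Q$ which is hyperbolically embedded into $Q$ relative to $\bbl$ and contains $V\cap(\Gamma\cap Q)$ with finite index; then Lemma \ref{fgru'}(3) promotes $V$ to a hyperbolically embedded subgroup of $G$ relative to $\bbk$, and since $V\le Q$ we have $V\cap\Gamma=V\cap(\Gamma\cap Q)$, so the finite-index conclusion transfers. The torsion-free addendum transfers verbatim because torsion-freeness passes to $Q$ and $\Gamma\cap Q$.

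For Lemma \ref{hypemb'fg} itself, the plan is as follows. By Proposition \ref{sq'} (applicable since $G$ is finitely generated, hence countable, so $\Gamma$ contains an element of infinite order by Lemma \ref{hyp-element}'s first case, or we just invoke \cite[Corollary 4.5]{Osi06b} if needed) we get a free subgroup $F_0$ of rank two inside $\Gamma$ which is strongly quasiconvex relative to $\bbk$ in $G$. Passing to a suitable finite-index subgroup of $F_0$, or rather iterating the amalgamation, I want to build up a virtually free subgroup that is simultaneously strongly quasiconvex relative to $\bbk$ and almost malnormal. The mechanism is Lemma \ref{mp}: with $H=E(h)$ for a hyperbolic-with-respect-to-$\bbk$ infinite-order $h\in\Gamma$ and $\bbh=\bbk\cup\{H\}$, take $Q$ a free subgroup of $\Gamma$ of rank two which is strongly quasiconvex relative to $\bbh$ (obtained as in Lemma \ref{sq'fg}), let $C=C(Q,H)$, and apply Theorem \ref{kap1} to the virtually free group $H$ with the finite family of subgroups $\{H\cap gKg^{-1} : K\in\bbk, g \text{ ranging over a set realizing the conjugacy intersections}\}$ together with $Q\cap H$; Theorem \ref{kap1} produces a virtually non-abelian free $R\le H$ that is almost malnormal in $H$ and whose conjugates meet each $M_l$ finitely, and by passing to a finite-index subgroup of $R$ (which remains virtually non-abelian free, quasiconvex relative to $\bbk$, and almost malnormal) we can arrange $d_X(1,r)\ge C$ for all $r\in R\setminus Q$ and $Q\cap H\subset R$. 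Then Lemma \ref{mp} gives $\langle Q\cup R\rangle\cong Q*_{Q\cap R}R$ strongly quasiconvex relative to $\bbk$ in $G$; this is the candidate $V$ (or rather $V=V_G(\langle Q\cup R\rangle)$, a finite overgroup).

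The remaining work is to check almost malnormality of $V$ in $G$ and condition on intersections with the $gKg^{-1}$, which is where I expect the main obstacle to lie. The point is that by construction $\langle Q\cup R\rangle$ intersects every $gH'g^{-1}$ with $H'\in\bbh$ in something either finite or conjugate (in $\langle Q\cup R\rangle$) to $R$; for $H'\in\bbk$ this is finite (as in the last paragraph of the proof of Lemma \ref{mp}), and for $H'=H$ the intersection is a conjugate of $R$, which is almost malnormal in $H$ and has controlled intersections with the parabolics by Theorem \ref{kap1}. One then argues, much as in the proof of Theorem \ref{kap1} via \cite[Theorem 1.6]{H-W09}, that $V_G(\langle Q\cup R\rangle)$ contains $\langle Q\cup R\rangle$ with finite index, is virtually non-abelian free, and is almost malnormal in $G$: for $g\in G$ the intersection $V\cap gVg^{-1}$ is commensurable with $\langle Q\cup R\rangle\cap g\langle Q\cup R\rangle g^{-1}$, and the latter, if infinite, would force $g$ into a controlled position via the structure of the amalgam and the almost malnormality of $R$ in $H$ together with the hyperbolic embedding of $H$ in $G$. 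Verifying that $V$ is \emph{proper} and contains $V\cap\Gamma$ with finite index is then immediate from $\langle Q\cup R\rangle\le\Gamma$ and Lemma \ref{aminfind}. In the torsion-free case every group in sight is torsion-free, so $E(h)$ is infinite cyclic, $R$ can be taken free of rank two, $V=\langle Q\cup R\rangle$ is free of rank two, and malnormality replaces almost malnormality throughout; the hard part — controlling intersections of the amalgam with conjugates of the parabolics and with its own conjugates — is the same, just cleaner.
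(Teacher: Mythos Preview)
Your proposal has a genuine gap, and in fact the core mechanism is misapplied. You propose to apply Theorem~\ref{kap1} to $H=E(h)$, calling it ``the virtually free group $H$'', and to extract from it a virtually non-abelian free subgroup $R\le H$. But $E(h)$ is virtually \emph{cyclic} by Lemma~\ref{hyp-hypemb}; it contains no non-abelian free subgroup whatsoever, so Theorem~\ref{kap1} (whose hypothesis is that the ambient group be virtually \emph{non-abelian} free) does not apply, and no such $R$ exists. With $R$ forced to be virtually cyclic, the amalgam $\langle Q\cup R\rangle$ you produce via Lemma~\ref{mp} is exactly the rank-two free group of Lemma~\ref{sq'fg}, and you are back where you started: strongly quasiconvex relative to $\bbk$, but with no handle on almost malnormality in $G$.

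The paper's argument supplies the missing idea. One takes the rank-two free $F\le\Gamma$ from Lemma~\ref{sq'fg}, passes to $M=V_G(F)$, and uses \cite[Theorem~1.6]{H-W09} to see that $F$ has finite index in $M$, so $M$ is itself finitely generated, virtually non-abelian free, and strongly quasiconvex relative to $\bbk$. The crucial step you are missing is \cite[Corollary~8.7]{H-W09}: there are only finitely many double cosets $Mg_lM$ with $M\cap g_lMg_l^{-1}$ infinite and $g_l\notin M$. Setting $M_l=M\cap g_lMg_l^{-1}$, one checks each $M_l$ is finitely generated and of infinite index in $M$, and \emph{then} applies Theorem~\ref{kap1} to the virtually non-abelian free group $M$ with the family $\{M_l\}$. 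The resulting $V\le M$ is almost malnormal in $M$ and has $mVm^{-1}\cap M_l$ finite for all $m\in M$; a short double-coset argument then promotes almost malnormality in $M$ to almost malnormality in $G$. Your sketch gestures at ``forcing $g$ into a controlled position'', but without the Hruska--Wise finiteness there are a priori infinitely many conjugacy types of large intersections to rule out, and Theorem~\ref{kap1} only handles a finite family. (Also note: your opening paragraph proves Theorem~\ref{hypemb'} from Lemma~\ref{hypemb'fg}, but the statement you were asked to prove \emph{is} Lemma~\ref{hypemb'fg}.)
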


\begin{proof}
By Lemma \ref{sq'fg}, the group $\Gamma$ contains a free subgroup $F$ of rank two which is strongly quasiconvex relative to $\bbk$ in $G$. 
We put $M=V_G(F)$. 
Since $F$ is strongly quasiconvex relative to $\bbk$ in $G$, it follows from \cite[Theorem 1.6]{H-W09} that $F$ is a finite index subgroup of $M$. 
Hence $M$ is strongly quasiconvex relative to $\bbk$ in $G$ and we have $V_G(M)=M$. 
By \cite[Corollary 8.7]{H-W09}, there exists only finitely many double cosets $MgM$ in $G$ such that $gMg^{-1}$ is not equal to $M$ and the intersection $M \cap gMg^{-1}$ is infinite. 
We denote the collection of such double cosets by $\{Mg_lM ~|~ l \in\{1, \ldots, n\}\}$. 
For each $l \in \{1, \ldots, n\}$, we put $M_l=M \cap g_lMg_l^{-1}$. 

We claim that for each $l \in \{1, \ldots, n\}$, $M_l$ is of infinite index in both $M$ and $g_lMg_l^{-1}$. 
Indeed, assume that this does not hold. 
Since $g_l$ does not belong to $M$ and $V_G(M)$ is equal to $M$, the subgroup $M_l$ is of infinite index in either $M$ or $g_lMg_l^{-1}$. 
By replacing $g_l$ by its inverse if necessary, we may assume that the subgroup $M_l$ is of finite index in $M$ and of infinite index in $g_lMg_l^{-1}$. 
It follows from \cite[Lemma 6.6]{Kap99} that for every positive integer $k$, the subgroup $M \cap g_l^kMg_l^{-k}$ is of finite index in $M$ and of infinite index in $g_l^kMg_l^{-k}$.
Then for every positive integer $p$, the subgroup $\bigcap_{k=1}^p M \cap g_l^kMg_l^{-k}$ is of finite index in $M$ and hence the intersection $\bigcap_{k=1}^p g_l^kMg_l^{-k}$ is infinite. 
By \cite[Theorem 1.4]{H-W09}, there exists a positive integer $p$ such that $g_l^p$ belongs to $M$. 
This contradicts the assumption that the subgroup $M\cap g_l^pMg_l^{-p}$ is of infinite index in $g_l^pMg_l^{-p}$. 

We also claim that for every $l \in \{1, \ldots, n\}$, the subgroup $M_l$ is finitely generated. 
Indeed, since $M$ is strongly quasiconvex relative to $\bbk$ in $G$, it follows from Theorem \ref{sru-sq} that the conjugate $g_lMg_l^{-1}$ is also strongly quasiconvex relative to $\bbk$ in $G$. 
Hence $M_l$ is strongly quasiconvex relative to $\bbk$ in $G$ (see for example \cite[Theorem 9.8]{Hru10} and \cite[Theorem 4.18]{Osi06a}). 
By Theorem \ref{sru-sq}, the subgroup $M_l$ is finitely generated. 

Hence it follows from Theorem \ref{kap1} that there exists a finitely generated and virtually non-abelian free subgroup $V$ of $M$ such that $V$ is almost malnormal in $M$ and $mVm^{-1} \cap M_l$ is finite for every $l \in \{1, \ldots, n\}$ and every element $m$ of $M$. 
Since $M$ contains a finitely generated free subgroup of finite index and $V$ is a finitely generated subgroup of $M$, the subgroup $V$ is undistorted in $M$, that is, it is strongly undistorted relative to the empty family $\emptyset$ in $M$. 
Since $M$ is strongly undistorted relative to $\bbk$ in $G$ by Theorem \ref{sru-sq}, the subgroup $V$ is strongly undistorted relative to $\bbk$ in $G$. 

We claim that $V$ is almost malnormal in $G$. 
Indeed, assume that $V$ is not almost malnormal in $G$. 
Then there exists an element $g$ of $G \setminus V$ such that the intersection $V \cap gVg^{-1}$ is infinite. 
In particular the intersection $M \cap gMg^{-1}$ is also infinite. 
Since $V$ is almost malnormal in $M$, the element $g$ belongs to $G \setminus M$. 
Then for some $l \in \{1, \ldots, n\}$ and some elements $m_1$ and $m_2$ of $M$, the element $g$ is equal to $m_1g_lm_2$. 
Therefore the intersection $V \cap (m_1g_lm_2)V(m_1g_lm_2)^{-1}$ is infinite and hence the intersection $m_1^{-1}Vm_1 \cap g_lMg_l^{-1}$ is also infinite. 
This contradicts the condition that for every $l \in \{1, \ldots, n\}$ and every element $m$ of $M$, the intersection $mVm^{-1} \cap M_l$ is finite. 

Thus the subgroup $V$ is strongly undistorted relative to $\bbk$ and almost malnormal in $G$. 
By Theorem \ref{hypemb-sruam}, the subgroup $V$ is hyperbolically embedded into $G$ relative to $\bbk$. 

For the case where $G$ is torsion-free, we can take a desired subgroup $V$ of $\Gamma$ by applying \cite[Theorem 5.16]{Kap99} to $F$ instead of applying Theorem \ref{kap1} to $M$ in the above argument.
\end{proof}

\begin{proof}[Proof of Theorem \ref{hypemb'}]
The proof is done in the same way as the proof of Proposition \ref{sq'} by using Lemma \ref{hypemb'fg} and Lemma \ref{fgru'} (3) instead of Lemma \ref{sq'fg} and Lemma \ref{fgru'} (2), respectively. 
\end{proof}

\section*{Acknowledgements}
The authors would like to thank Professor Ilya Kapovich for his useful suggestion.

\end{document}